\documentclass[12pt]{article}
\usepackage[hmargin={1.75truecm,1.90truecm},vmargin={2truecm,2truecm}]{geometry}
\usepackage[utf8]{inputenc}
\usepackage{hyperref}
\usepackage{amsmath}
\usepackage{amsthm}
\usepackage{amsfonts}
\usepackage{amssymb}
\usepackage{graphicx}
\usepackage{xcolor}
\usepackage{arydshln}
\usepackage[affil-it]{authblk}
\usepackage{natbib}
\usepackage{url}
\usepackage{afterpage}
\usepackage{amsmath} 
\usepackage{amssymb} 
\usepackage{multirow}
\usepackage{multicol}
\usepackage{booktabs}
\usepackage{orcidlink}
\usepackage[linesnumbered,ruled,vlined]{algorithm2e}

\SetCommentSty{AlgoCommentStyle}
\usepackage{rotating} 
\usepackage[linesnumbered,ruled]{algorithm2e} 
\usepackage{amsthm}
\usepackage{caption}
\usepackage{subcaption}
\usepackage{cleveref}
\usepackage{ragged2e}
\usepackage{todonotes}
\usepackage{tikz}
\usepackage{tikz-3dplot}
\usepackage{tikzscale}
\usepackage{pdflscape} 
\usepackage{float}
\usetikzlibrary{arrows}
\usetikzlibrary{shapes}
\usetikzlibrary{snakes}
\usetikzlibrary{patterns}
\usetikzlibrary{backgrounds,topaths}   
\usepackage{lipsum}
\usepackage{threeparttable}

\newtheorem{theorem}{Theorem}[section]
\newtheorem{example}[theorem]{Example}
\newtheorem{observation}[theorem]{Observation}
\newtheorem*{observation*}{Observation}
\newtheorem{remark}[theorem]{Remark}
\newtheorem{proposition}[theorem]{Proposition}

\newtheorem{corollary}[theorem]{Corollary}

\crefname{theorem}{Theorem}{Theorems}
\crefname{example}{Example}{Examples}
\crefname{observation}{Observation}{Observations}
\crefname{remark}{Remark}{Remarks}
\crefname{proposition}{Proposition}{Propositions}
\crefname{lemma}{Lemma}{Lemmas}
\crefname{corollary}{Corollary}{Corollaries}
\crefname{algorithm}{Algorithm}{Algorithms}
\crefname{table}{Table}{Tables}	
\crefname{figure}{Figure}{Figures}
\crefname{section}{Section}{Sections}

\providecommand{\keywords}[1]{\small \quad \quad \textbf{Keywords:} #1}
\newcommand{\ROMAN}[1]{\uppercase\expandafter{\romannumeral#1}}
\newcommand{\rev}[1]{{\color{black}#1}}

\newcommand{\MCLP}{{MCLP}\xspace}
\newcommand{\MinCLP}{{MinCLP}\xspace}
\newcommand{\MCLPNW}{{GMCLP}\xspace}

\newcommand{\LP}{\text{LP}\xspace}
\newcommand{\LIN}{\text{\rm L}\xspace}

\newcommand{\CONR}{\text{CONS-REDUCTION}\xspace}

\newcommand{\CPX}{\texttt{CPX}\xspace}

\newcommand{\IDI}{\texttt{CPXC+IDT}\xspace}
\newcommand{\CPXC}{\texttt{CPXC}\xspace}
\newcommand{\BDC}{\texttt{BD}\xspace}

\newcommand{\NOAGG}{\texttt{NO\_AGG}\xspace}
\newcommand{\NODR}{\texttt{NO\_DR}\xspace}
\newcommand{\NOVI}{\texttt{NO\_TCI}\xspace}

\newcommand{\DeltaS}{\texttt{$\Delta$S}\xspace}
\newcommand{\RT}{\texttt{RT}\xspace}
\newcommand{\RN}{\texttt{RN}\xspace}
\newcommand{\RGPC}{\texttt{$\Delta$GPC}\xspace}
\newcommand{\RV}{\texttt{RV}\xspace}
\newcommand{\RC}{\texttt{RC}\xspace}

\newcommand{\solver}[1]{\textsc{#1}\xspace}
\newcommand{\CPLEX}{\solver{CPLEX}}

\newcommand{\MIP}{\text{MIP}\xspace}

\newcommand{\BD}{{BD}\xspace}

\newcommand{\CI}{\mathcal{I}}
\newcommand{\CJ}{\mathcal{J}}

\newcommand{\CP}{\mathcal{P}}

\newcommand{\CS}{\mathcal{S}}

\newcommand{\A}{\mathcal{A}}
\newcommand{\Y}{\mathcal{Y}}
\newcommand{\N}{\mathcal{N}}

\newcommand{\C}{\mathcal{C}}

\newcommand{\DATAONE}{T1\xspace}
\newcommand{\DATATWO}{T2\xspace}

\newcommand{\tblT}{\texttt{T}\xspace}

\newcommand{\tblN}{\texttt{N}\xspace}

\newcommand{\tblGPC}{\texttt{GI\,\%}\xspace}

\newcommand{\tblST}{\texttt{ST}\xspace}
\newcommand{\tblPT}{\texttt{PT}\xspace}
\newcommand{\tblDC}{\texttt{$\Delta$C}\xspace}
\newcommand{\tblFV}{\texttt{$\Delta$V}\xspace}

\newcommand{\tblLB}{$z$\xspace}

\newcommand{\tblS}{\texttt{S}\xspace}
\DeclareMathOperator*{\argmin}{argmin}
\DeclareMathOperator*{\argmax}{argmax}

\newcommand{\GMI}{{GMI}\xspace}
\newcommand{\UNIT}{\text{U-$0.5$}\xspace}
\newcommand{\NU}{\text{NU}\xspace}
\newcommand{\LPG}{\texttt{LPG\,\%}\xspace}
\newcommand{\CPXCTCI}{\texttt{CPXC+T}\xspace}

\bibliographystyle{model5-names-nolink}

\title{Presolving and cutting planes for the generalized maximal covering location problem}
\author[a,b]{Wei Lv\orcidlink{0009-0009-6861-9532}}
\author[a]{Cheng-Yang Yu}
\author[a]{Jie Liang}
\author[a,b]{Wei-Kun Chen\orcidlink{0000-0003-4147-1346}}
\author[c,d]{Yu-Hong Dai\orcidlink{0000-0002-6932-9512}}

\affil[a]{\small School of Mathematics and Statistics, Beijing Institute of Technology, Beijing 100081, China\\\textit{\{lvwei,yuchengyang,liangjie,chenweikun\}@bit.edu.cn}}
\affil[b]{\small State Key Laboratory of Cryptology, P. O. Box 5159, Beijing, 100878, China}
\affil[c]{\small Academy of Mathematics and Systems Science, Chinese Academy of Sciences, Beijing 100190, China}
\affil[d]{\small School of Mathematical Sciences, University of Chinese Academy of Sciences, Beijing 100049, China\\\textit{dyh@lsec.cc.ac.cn}}
\date{}

\begin{document}

\maketitle

\begin{abstract}
	This paper considers the generalized maximal covering location problem (\MCLPNW) which establishes a fixed number of facilities to maximize the weighted sum of the covered customers, allowing customer weights to be  positive or negative.
	Due to the huge number of linear constraints to model the covering relations between the candidate facility locations and customers, and particularly the poor linear programming (\LP) relaxation,
	the \MCLPNW is extremely difficult to solve by state-of-the-art mixed integer programming (\MIP) solvers. 
	To improve the computational performance of \MIP-based approaches for solving \MCLPNW{s}, we propose customized presolving and cutting plane techniques, which are isomorphic aggregation, dominance reduction, and two-customer inequalities.
	The isomorphic aggregation and dominance reduction can not only reduce the problem size but also strengthen the \LP relaxation of the \MIP formulation of the \MCLPNW.
	The two-customer inequalities can be embedded into a branch-and-cut framework to further strengthen the \LP relaxation of the \MIP formulation on the fly. 
	By extensive computational experiments, we show that all three proposed techniques can substantially improve the capability of \MIP solvers in solving \MCLPNW{s}. 
	In particular, for a testbed of $40$ instances with identical numbers of customers and candidate facility locations in the literature, the proposed techniques enable us to provide optimal solutions for $13$ previously unsolved benchmark instances;
	for a testbed of $336$ instances where the number of customers is much larger than the number of candidate facility locations, the proposed techniques can turn most of them from intractable to easily solvable.
	\vspace{8pt} \\
	\keywords{Location $\cdot$ presolving $\cdot$ cutting planes $\cdot$ maximal covering location problem $\cdot$ negative weights}
\end{abstract}

\section{Introduction}

The maximal covering location problem (\MCLP), first proposed by \cite{Church1974}, is one of the fundamental discrete optimization problems and has been widely investigated in the literature.
Given a collection of customers and a collection of candidate facility locations associated
with a notion of \emph{coverage}, which specifies whether or not a customer can be covered by a candidate facility location, the \MCLP attempts to establish a fixed number of facilities to maximize the weighted sum of the covered customers.
The \MCLP arises in or serves as a building block in a wide variety of applications, including emergency medical services \citep{Adenso-Diaz1997,Degel2015}, forest fire detection \citep{Bao2015}, ecological monitoring and conservation \citep{Farahani2014,Martin2021}, bike sharing \citep{Muren2020}, disaster relief \citep{Iloglu2020,Alizadeh2021}, waste collection \citep{Fischer2023}, and transportation \citep{Bucarey2022}.
For a detailed discussion of the variants and applications of the \MCLP, we refer to the recent surveys \cite{Farahani2012,Murray2016,Garcia2019,Marianov2024} and the references therein.

In the classic \MCLP of \cite{Church1974}, customer weights are assumed to be positive. 
This is usually applicable in the context of establishing desirable facilities such as supermarkets, garages, banks, and police stations. 
The more customers covered, the better.
For problems with undesirable or obnoxious facilities such as nuclear power stations and prisons, customers do not wish to be covered. 
In such contexts, the minimal covering location problem (\MinCLP), investigated in \cite{Church1976,Murray1998,Church2022}, is applicable.  
The \MinCLP attempts to locate a fixed number of facilities while minimizing the weighted sum of the covered customers. 
As such, the \MinCLP can be seen as the \MCLP with negative weights of customers.
\cite{Berman1996,Berman2003,Plastria1999} studied a special case of the \MinCLP where only a single undesirable facility has to be located.
\cite{Berman2008} investigated the \MinCLP with distance constraints which enforce a minimum distance between any pair of facilities.
For other variants of the \MinCLP, we refer to \cite{Berman2016,Karatas2021,Church2022,Khatami2023} among many of them.

In this paper, we consider a generalized version of the \MCLP and \MinCLP, called the generalized covering location problem (\MCLPNW), where the weights of the customers are allowed to be positive or negative \citep{Berman2009,Berman2010}.
The \MCLPNW (with a mixture of positive and negative customer weights) arises in the context that facilities are undesirable or obnoxious to certain customers while offering beneficial services to others.
For example, if the facilities are factories, polluting industrial units, or sewage treatment plants, residential districts may wish them to be located farther away (i.e., not to be covered), while industrial customers would benefit from the  proximity \citep{Drezner1991,Maranas1994}.
The \MCLPNW is also suitable for modeling problems with a mixture of desirable and undesirable customers. 
Two examples for this are detailed as follows.
First, when locating stores in a city, low-crime areas within the stores' coverage radius may be regarded as desirable customers, while high-crime areas may be seen as undesirable customers, as the stores may have to pay high insurance fees or suffer from revenue losses due to thefts and robberies \citep{Berman2009}.
Second, in a competitive environment, opening new facilities to serve many customers with positive demand is beneficial to revenue, but the proximity of competitors' facilities (i.e., undesirable customers) could decrease the expected profit \citep{Fomin2022}.

\cite{Berman2009} first generalized the mixed integer programming (\MIP) formulation of the classic \MCLP \citep{Church1974} and proposed an \MIP formulation for the \MCLPNW. 
Although this enables general-purpose \MIP solvers to find an optimal solution for the problem, 
solving the \MIP formulation of the \MCLPNW is very challenging for state-of-the-art \MIP solvers \citep{Berman2009,Berman2010}; 
for a testbed of $40$ instances with up to 900 candidate facility locations and customers, \cite{Berman2009} observed that only $21$ instances were solved to optimality by the \MIP solver \CPLEX within $2$ hours.

\subsection{Contributions and outlines}

The main motivation of this paper is to develop customized \MIP techniques to improve the computational performance of \MIP-based approaches for solving \MCLPNW{s}.
In particular, we first show that the presence of negative customer weights in the \MCLPNW could not only lead to 
a huge number of linear constraints to model the covering relations between the candidate facility locations and customers
but also result in an extremely poor linear programming (\LP) relaxation of the \MIP formulation of \cite{Berman2009}, thereby making state-of-the-art \MIP-based approaches (including calling \MIP solvers) inefficient to solve the \MCLPNW.
In an attempt to address these two challenges,
we then propose customized presolving and cutting plane techniques taking the special problem structure of the \MCLPNW into consideration.
To the best of our knowledge, this is the first time that customized \MIP techniques are developed to solve the \MCLP with (some or all) negative customer weights. 
The main contributions of this paper are summarized as follows.
\begin{itemize}
	\item We propose two customized presolving techniques, namely, isomorphic aggregation and dominance reduction. 
	The isomorphic aggregation aggregates several customers, covered by the same candidate facility locations, into a single customer.
	The dominance reduction derives a dominance relation between each pair of customers satisfying the condition that the candidate facility locations that can cover one customer can also cover the other.
	The presence of these dominance relations enables us to remove some constraints from the \MIP formulation of the \MCLPNW.
	Although the two proposed presolving techniques are designed to reduce the problem size of the \MIP formulation of the \MCLPNW, 
	they can also effectively strengthen the \LP relaxation of the problem formulation, making the reduced problem much more computationally solvable.
	\item We develop a family of valid inequalities, called two-customer inequalities, for the \MCLPNW. 
	The proposed two-customer inequalities generalize the relations derived by the dominance reduction, and can be embedded in a branch-and-cut framework to further strengthen the \LP relaxation of the \MIP formulation on the fly. 
	We also analyze how the proposed two-customer inequalities improve the \LP relaxation of the \MIP formulation, which plays an important role in the design of the separation algorithm.
\end{itemize}
Extensive computational results demonstrate that the three proposed techniques can substantially improve the capability of \MIP solvers in solving \MCLPNW{s}. 
In particular, for a testbed of {$40$} instances with identical numbers of customers and candidate facility locations \citep{Berman2009}, the proposed techniques enable us to provide optimal solutions for $13$ previously unsolved benchmark instances;	
for a testbed of $336$ instances where the number of customers is much larger than the number of candidate facility locations \citep{Cordeau2019}, the proposed techniques can turn most of them from intractable to easily solvable.
Moreover, compared to an extension of the state-of-the-art Benders decomposition (\BD) in \cite{Cordeau2019}, our approach (using an \MIP solver with the three proposed techniques) is significantly more efficient.

It is worthwhile remarking that although the three proposed techniques are motivated by the \MCLPNW, they can also be applied to solve the variants of the \MCLPNW that consider other practical constraints on the facilities such as the distance constraints of the facilities \citep{Moon1984,Berman2008,Grubesic2012}.

The remainder of the paper is organized as follows.
\cref{sec:literature} reviews the relevant literature on the \MCLPNW.
\cref{sec:formweak} introduces the \MIP formulation of \cite{Berman2009} and discusses the challenges of using  \rev{\MIP-based approaches} to solve them.
Sections \ref{isomo}, \ref{sec:dominance}, and \ref{sec_2link} develop the isomorphic aggregation, dominance reduction, and two-customer inequalities for the \MCLPNW, respectively.
\cref{sect:num} presents the computational results.
Finally, \cref{sec:conclusion} draws the conclusions.

\subsection{Literature review}\label{sec:literature}

In this subsection, we review the relevant references on the solution algorithms for the \MCLPNW and its two special cases, the \MCLP and \MinCLP. 

For the \MCLP, researchers have developed various heuristics and exact algorithms.
Here, we only review the relevant exact algorithms for solving the \MCLP; see recent surveys \cite{Farahani2012,Murray2016,Garcia2019} for a detailed review of various heuristic algorithms.
\citet{Dwyer1981} developed an LP-based branch-and-bound algorithm for solving a special case of the MCLP where all customers have equal weights. 
Subsequently, \citet{Downs1996} proposed a Lagrangian-based branch-and-bound algorithm to solve the (general) \MCLP.
The authors reported results on \MCLP instances with up to $74$ candidate facility locations and $2241$ customers. 
Recently, \citet{Cordeau2019} developed a \BD to solve large-scale realistic \MCLP{s} where the number of customers is much larger than the number of candidate facility locations.
Their results demonstrated that the \BD is capable of solving \MCLP{s} with $100$ candidate facility locations and up to $15$ million customers.
\cite{Lamontagne2024} and \cite{Guney2020} used a similar \BD to solve \MCLP{s} in a dynamic setting and \MCLP{s} that are derived from influence maximization problems in social networks, respectively.
It is worthwhile remarking that the \LP relaxation of the standard \MIP formulation of the \MCLP is usually tight or near tight \citep{ReVelle1993,Snyder2011,Cordeau2019}, which enables \rev{state-of-the-art \MIP-based approaches} to solve moderate-sized instances to optimality within a reasonable period of time. 
\cite{Chen2023b} further proposed various customized presolving techniques to enhance the capability of  \rev{state-of-the-art \MIP-based approaches} in solving large-scale \MCLP{s}.
\rev{In \cref{sec:formweak}, we extend the presolving techniques of \cite{Chen2023b} to solving the \MCLPNW.}

In contrast to the \MCLP which can be easily tackled by state-of-the-art \MIP-based approaches (at least for moderate-sized instances), the presence of negative customer weights in the \MinCLP or \MCLPNW makes the problem extremely hard to solve by \MIP solvers. 
For the \MinCLP, \cite{Murray1998} observed that solving \MinCLP{s} by an \MIP solver requires a large computational effort; for instances with only $79$ candidate facility locations and customers, it requires up to $25$ nodes and $83$ seconds to find an optimal solution.
For a variant of the \MinCLP where the distance constraints are included, the results in \cite{Berman2008} show that \CPLEX even failed to solve instances with $500$ candidate facility locations and customers within the $1800$ seconds time limit.  
For the \MCLPNW, the results in \cite{Berman2009} reveal that it is inefficient to use \MIP solvers to find an optimal solution within a reasonable period of time. 
Despite such challenges, no customized \MIP technique for the \MCLPNW or its special case \MinCLP has been explored in the literature until now. 
\cite{Berman2008} developed three heuristic algorithms to find a feasible solution for their problem, which can also be used to solve the \MinCLP. 
\cite{Berman2009} designed the ascent algorithm, simulated annealing, and tabu search to find a feasible solution for the \MCLPNW.

\section{\MIP formulation and its weaknesses}\label{sec:formweak}
In this section, we will first review the \MIP formulation of \cite{Berman2009} for the \MCLPNW 
and then discuss the challenges to solve the formulation by \rev{\MIP-based approaches}.

\subsection{Problem formulation}\label{sec:formulation}
We start with the following notations for the \MCLPNW:
\begin{itemize}
	\item $\mathcal{I}$ and $i$: set and index of candidate facility locations;
	\item $\mathcal{J}$ and $j$: set and index of customers;
	\item $\mathcal{I}_j$: set of candidate facility locations that can cover customer $j$; 
	\item $w_j$: weight of customer $j$;
	\item $\N$: set of customers with negative weights $w_j < 0$;
	\item $p$: number of facilities to be established.
\end{itemize}
Usually,  
a customer $j$ can be covered by a candidate facility location $i$ if the distance $d_{ij}$ between $i$ and $j$ is less than or equal to a prespecified coverage distance $R$, and thus $\CI_j = \{i \in \CI\,:\, d_{ij} \leq R\}$.
We define the following two sets of binary variables:
\begin{equation*}
	\begin{aligned}
		y_i &=\left\{\begin{array}{ll}1,
			& {\text{if~facility}}~ i ~{\text{is open}};~\\
			0,& {\text{otherwise,}}\end{array}\right.&{\rm and}~~
		    x_j &=\left\{\begin{array}{ll}1,
			& {\text{if~customer}}~ j ~{\text{is covered}};~\\
			0,& {\text{otherwise}}.\end{array}\right.
	\end{aligned}
\end{equation*}
Throughout, for a vector $a \in \mathbb{R}^{n}$ and a subset $\CS\subseteq \{1,\dots, n\}$, we denote $a(\CS)= \sum_{i \in \CS}a_i$.
The \MCLPNW attempts to open $p$ facilities such that the weighted sum of the covered customers is maximized.
The \MIP formulation for the \MCLPNW \citep{Berman2009} can be written as:   
\begin{subequations}\label{mclp-nw1}
	\begin{align}
		\max \quad &\sum_{j\in \CJ}w_j x_j \nonumber\\
		\text{\rm s.t.}
		\quad &y(\CI) = p,\label{p-cover}\\
		\quad &y(\CI_j) \geq x_j, &&~\forall~ j \in \CJ \backslash \N, \label{pos-cover}\\
	    \quad &x_j \geq y_i, &&~\forall~j \in \N,~ i \in \CI_j,   \label{neg-cover} \\
		\quad &x_j \in \{0, 1\}, &&~\forall ~ j \in \CJ,\label{xbinary} \\
		\quad &y_i \in \{ 0, 1 \}, &&~\forall ~ i \in \CI.\label{ybinary}
	\end{align}
\end{subequations}
The objective function maximizes the weighted sum of the  covered customers.
Constraint \eqref{p-cover} ensures that the total number of open facilities is $p$.
The first family of covering constraints \eqref{pos-cover} guarantees that for each customer $j$ with a nonnegative weight $w_j \geq 0$, if it is covered, then at least one of the candidate facility locations in set $\CI_j$ must be open.
The second family of covering constraints \eqref{neg-cover} guarantees that for customer $j$ with a negative weight $w_j <0$, if there exists some open facility $i$ that can cover it, then it must be covered.
Finally, constraints \eqref{xbinary} and \eqref{ybinary} restrict the decision variables to be binary.

	\cite{Chen2023b} developed various presolving techniques to reduce the problem size and improve the efficiency of employing \MIP solvers in solving the classic \MCLP (i.e., formulation \eqref{mclp-nw1} with $\N = \varnothing$). 
	Four presolving techniques of \cite{Chen2023b} can also be adapted to the (general) \MCLPNW \footnote{Due to the equality constraint \eqref{p-cover} and the presence of customers $j$ with negative weights $w_j < 0$, the presolving technique (called domination) in \cite{Chen2023b} for the classic \MCLP cannot be applied to (the general) problem \eqref{mclp-nw1}.} and are summarized as follows.
	\begin{itemize}
		\item {P1:} If $\CI_j = \{i\}$ for some $i \in \CI$ and $j \in \CJ\backslash\N$, 
		then variable $x_j$ can be replaced by variable $y_i$ and constraint $y_i \geq x_j$ can be removed from formulation \eqref{mclp-nw1};
		\item {P2:} Given $j, r \in \CJ\backslash\N$, if $\CI_j = \CI_r$, then variable $x_r$ can be replaced by variable $x_j$ and constraint $y(\CI_r) \geq x_r$ can be removed from formulation \eqref{mclp-nw1};
		\item {P3:} Given $r, j_1, \ldots, j_\tau \in \CJ\backslash\N$ such that $\CI_{j_k} \subseteq \CI_r$ for all $k  = 1, 2, \ldots, \tau$ and $\CI_{j_{k_1}} \cap \CI_{j_{k_2}} = \varnothing$ for all $k_1, k_2 \in \{1,2,\ldots,\tau\}$ with $k_1 \neq k_2$, 
		constraint $y(\CI_r) \geq x_r$ can be replaced by constraint $\sum_{k=1}^{\tau} x_{j_k} + y(\CI_r\backslash\!\cup_{k=1}^{\tau} \CI_{j_k}) \geq x_r$;
		\item {P4:} For a node in the branch-and-cut search tree of solving formulation \eqref{mclp-nw1} by \MIP solvers, we can fix $y_i  = 0$ for all $i \in \CI_r$ and $r \in \CJ_0$, where $\CJ_0 \subseteq \CJ\backslash\N$ is the set of variables fixed at zero.
	\end{itemize}
	The derivations of the above presolving techniques for the \MCLPNW are similar to those in \cite{Chen2023b} and thus are omitted here.

\subsection{Challenges of solving the \MIP formulation \eqref{mclp-nw1}}\label{sec_weak}

Formulation \eqref{mclp-nw1} generalizes the well-known \MCLP \citep{Church1974} in which $\N= \varnothing$.
Although the \MCLP is NP-hard \citep{Megiddo1983}, state-of-the-art \MIP-based approaches can solve moderate-sized or even large-scale instances within a reasonable period of time \citep{Snyder2011,Cordeau2019,Chen2023b}.
However, for the \MCLPNW with some negative customer weights, solving the instances of formulation \eqref{mclp-nw1} by the current \MIP-based approaches is very challenging due to the following two weaknesses.

First, for a customer $j$ with a negative weight $w_j <0$, $|\CI_j|$ constraints $x_j \geq y_i$, $i \in \CI_j$, are required to model the covering relation between the candidate facility locations and customer $j$. 
This is intrinsically different from modeling the covering relation between the candidate facility locations and a customer with a nonnegative weight where only a single constraint $y(\CI_j) \geq x_j$ is needed. 
As such, compared with that of the classic \MCLP, the number of covering constraints in formulation \eqref{mclp-nw1} of the \MCLPNW is usually much larger, especially for the case with a large $|\N|$ or $|\CI_j|$, $j \in \N$.
The huge number of covering constraints makes it potentially much more expensive to solve even the \LP relaxation of formulation \eqref{mclp-nw1}, deteriorating the overall performance of \MIP solvers. 
Note that the aforementioned presolving techniques P1--P4 are not designed for problems with some negative customer weights, and their effectiveness in reducing the number of covering constraints of the \MCLPNW is limited, as observed in our experiments.
\begin{remark}
\cite{Berman2009} addressed the huge number of constraints \eqref{neg-cover} by replacing them with the aggregated constraints:
\begin{equation}\label{aggregated}
	y(\CI_j) \leq p x_j,~\rev{\forall~j \in \N}.
\end{equation}
Observe that when $x_j=0$, constraint \eqref{aggregated} also enforces $y_i=0$ for all $i \in \CI_j$; when $x_j =1$,  constraint \eqref{aggregated} is implied by \eqref{p-cover}.
However, replacing constraints \eqref{neg-cover} with the aggregated constraints in \eqref{aggregated} generally leads to a poor \LP relaxation.
In Appendix A of the online supplement\footnote{The online supplement is available at: \url{https://drive.google.com/file/d/1pRtDE26j48w3sJXMueR0MflnLWhI5F5Y/view?usp=share_link}.}, we observed that this operation does not improve the performance of solving formulation \eqref{mclp-nw1}.
Therefore, we will not consider the aggregated version of the covering constraints in the subsequent discussions.
\end{remark}

Second, unlike the classic \MCLP whose \LP relaxation is usually tight or near tight \citep{ReVelle1993,Snyder2011,Cordeau2019}, 
the presence of negative customer weights $w_j <0$, $j \in \N$, could lead to an extremely poor \LP relaxation, thereby forcing the branch-and-cut procedure to explore a huge number of nodes.
To see this, we first characterize the optimal value of formulation \eqref{mclp-nw1} and its \LP relaxation using the $y$ variables, which is based on the following observation.
\begin{observation}\label{keyobs}
	(i) There exists an optimal solution $(x^*,y^*)$ of formulation \eqref{mclp-nw1} such that
	\begin{align}\label{optd1}
		& x^*_j =  \min\{1,y^*(\CI_j) \} =\max_{i \in \CI_j} y^*_i,~\forall~ j \in \CJ.
	\end{align}
	(ii) There exists an optimal solution $(x^*,y^*)$ of the \LP relaxation of formulation \eqref{mclp-nw1} such that
	\begin{equation}\label{optd2}
		x^*_j =\left\{ \begin{array}{ll} 
			\max_{i \in \CI_j} y^*_i, & \text{if}~j \in \N; \\
			\min\{1,y^*(\CI_j) \},&\text{otherwise},
		\end{array} \right.
		\forall~j \in \CJ.
	\end{equation} 
\end{observation}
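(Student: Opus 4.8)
The plan is to prove both parts by the same "$x$-repair" argument: starting from an arbitrary optimal solution $(x^*,y^*)$, I hold $y^*$ fixed and replace each $x^*_j$ by the value prescribed in the statement, then argue that (a) the new point is still feasible and (b) its objective value is no worse, so it too is optimal. The guiding principle is that, with $y$ held fixed, each variable $x_j$ appears only in the objective (with coefficient $w_j$) and in the covering constraints associated with customer $j$; hence the variables $x_j$ decouple and each may be optimized independently. For a customer $j$ with $w_j\ge 0$ the objective pushes $x_j$ up, and the only active restrictions are $x_j\le y^*(\CI_j)$ from \eqref{pos-cover} and $x_j\le 1$, so the best choice is $x_j=\min\{1,y^*(\CI_j)\}$; for a customer $j\in\N$ (so $w_j<0$) the objective pushes $x_j$ down, and the active restrictions are $x_j\ge y^*_i$ for all $i\in\CI_j$ from \eqref{neg-cover}, giving the best choice $x_j=\max_{i\in\CI_j}y^*_i$.

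For part (ii) this is essentially the whole argument: I would define $\hat x_j$ by the two-case formula in \eqref{optd2}, check that $\hat x$ respects $0\le\hat x_j\le 1$ together with \eqref{pos-cover} and \eqref{neg-cover} given $y^*$, and verify $\sum_j w_j\hat x_j\ge\sum_j w_j x^*_j$ by comparing termwise using the sign of $w_j$ (for $j\notin\N$ we have $\hat x_j\ge x^*_j$ and $w_j\ge 0$; for $j\in\N$ we have $\hat x_j\le x^*_j$ and $w_j<0$). Since $(x^*,y^*)$ was optimal, $(\hat x,y^*)$ is optimal as well, which establishes \eqref{optd2}.

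For part (i) I would run the identical repair in the integer setting, observing that the prescribed values $\hat x_j=\min\{1,y^*(\CI_j)\}$ (for $j\notin\N$) and $\hat x_j=\max_{i\in\CI_j}y^*_i$ (for $j\in\N$) are automatically $\{0,1\}$-valued because $y^*$ is binary, so feasibility of \eqref{xbinary} is preserved. The remaining point is that, once $y^*$ is binary, the two expressions $\min\{1,y^*(\CI_j)\}$ and $\max_{i\in\CI_j}y^*_i$ coincide for every $j$: both equal $1$ precisely when some $y^*_i=1$ with $i\in\CI_j$, and both equal $0$ otherwise. This collapses the two-case formula of part (ii) into the single chain of equalities \eqref{optd1}.

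I do not expect a genuine obstacle here; the one place requiring care is to confirm that no $x_j$ is coupled to another $x$-variable or to the equality constraint \eqref{p-cover} (which involves only $y$), so that the independent, termwise optimization of the $x_j$ is legitimate. Making the sign bookkeeping explicit in the objective comparison is the single spot where a sign slip could occur, so I would write out both cases of that comparison carefully.
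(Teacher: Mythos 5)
Your repair argument is correct and is exactly the reasoning the paper leaves implicit: the paper states \cref{keyobs} without proof, relying on precisely this observation that once $y^*$ is fixed the variables $x_j$ decouple, so each can be pushed to its extreme feasible value according to the sign of $w_j$, with the two expressions collapsing to one in the binary case. No gap; your write-up just makes explicit what the authors treated as evident.
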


  \begin{theorem}\label{theo_opt}
  	Let $\Y = \left\{y \in \{0, 1\}^{|\CI|} \,:\, y(\CI) = p\right\}~\text{and}~\Y_{\text{\rm\LIN}} = \left\{y \in [0, 1]^{|\CI|} \,:\,  y(\CI) = p\right\}$.
The optimal values of formulation \eqref{mclp-nw1} and its \LP relaxation are given by
	\begin{align}
		& z = \max_{y\in \Y}\left\{\sum_{j\in \CJ} w_j \cdot \min \{1, y(\CI_j) \}\right\},\label{MCLPNW_O}\\
		& z_{\text{\rm\LP}} = \max_{y\in \Y_{\text{\rm\LIN}}}\left\{\sum_{j\in \N}w_j \cdot \max_{i \in \CI_j}y_i+
		\sum_{j\in \CJ\backslash \N} w_j \cdot \min \{1,  y(\CI_j) \} \right\}\label{MCLPNW_LP}.
	\end{align}
\end{theorem}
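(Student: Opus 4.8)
The plan is to read the theorem as the outcome of projecting the $x$ variables out of formulation \eqref{mclp-nw1}. For a fixed location vector $y$, the objective $\sum_{j\in\CJ} w_j x_j$ separates across customers, and each $x_j$ may be optimized on its own subject only to the constraints in which it occurs. Since \cref{keyobs} already pins down the optimizing value of each $x_j$ as a function of $y$, the task reduces to substituting that value into the objective and confirming that the induced $y$ ranges over exactly $\Y$ (integer case) or $\Y_{\text{\rm\LIN}}$ (\LP case). I would establish each of \eqref{MCLPNW_O} and \eqref{MCLPNW_LP} by a matching pair of inequalities: ``$\le$'' from the optimal solution supplied by \cref{keyobs}, and ``$\ge$'' from an explicit feasible construction.

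For \eqref{MCLPNW_O}, the ``$\le$'' direction uses \cref{keyobs}(i) to pick an optimal $(x^*, y^*)$ of \eqref{mclp-nw1} with $x^*_j = \min\{1, y^*(\CI_j)\}$; as $y^*$ is binary and obeys \eqref{p-cover} it lies in $\Y$, so $z = \sum_{j\in\CJ} w_j\,\min\{1, y^*(\CI_j)\}$ is at most the stated maximum. For ``$\ge$'', I would take an arbitrary $y\in\Y$, set $x_j := \min\{1, y(\CI_j)\}$, and verify feasibility: \eqref{p-cover} is inherited, \eqref{pos-cover} holds since $\min\{1, y(\CI_j)\}\le y(\CI_j)$, and \eqref{neg-cover} holds because for binary $y$ one has $\min\{1, y(\CI_j)\} = \max_{i\in\CI_j} y_i \ge y_i$. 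The same identity makes $x_j\in\{0,1\}$, so the construction is admissible and attains objective value $\sum_{j\in\CJ} w_j\,\min\{1, y(\CI_j)\}$; maximizing over $\Y$ gives the reverse bound and hence equality.

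The proof of \eqref{MCLPNW_LP} is the mirror image, with \cref{keyobs}(ii) replacing (i), $\Y_{\text{\rm\LIN}}$ replacing $\Y$, and the split assignment $x_j := \max_{i\in\CI_j} y_i$ for $j\in\N$ and $x_j := \min\{1, y(\CI_j)\}$ for $j\in\CJ\backslash\N$. The only extra checks are $x_j\in[0,1]$, immediate from $y\in[0,1]^{|\CI|}$ (so $\max_{i\in\CI_j} y_i\in[0,1]$ and $y(\CI_j)\ge 0$), while constraints \eqref{neg-cover} now hold by the very choice $x_j = \max_{i\in\CI_j} y_i$.

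None of the individual steps is technically hard once \cref{keyobs} is in hand; the point I would watch most carefully is the place where the two characterizations diverge. For binary $y$ the quantities $\min\{1, y(\CI_j)\}$ and $\max_{i\in\CI_j} y_i$ coincide, which is why the negative-weight customers enter \eqref{MCLPNW_O} through the same $\min\{1, y(\CI_j)\}$ term as the others; for fractional $y$, however, $\max_{i\in\CI_j} y_i \le \min\{1, y(\CI_j)\}$ with a possibly large gap, and because $w_j<0$ on $\N$ this gap inflates \eqref{MCLPNW_LP}. Tracking this inequality correctly --- rather than any single delicate estimate --- is the main obstacle, and it is exactly what makes the \LP relaxation weak, motivating the strengthening techniques developed later.
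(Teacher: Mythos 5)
Your proof is correct and takes essentially the same approach as the paper: the paper states \cref{theo_opt} as an immediate consequence of \cref{keyobs} (projecting the $x$ variables out of formulation \eqref{mclp-nw1}) and gives no further argument. Your two-sided verification---\cref{keyobs} for one inequality, and the explicit feasible assignment of $x$ as a function of $y$ for the other---is precisely the routine argument the paper leaves implicit.
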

\noindent Compared with $z$ in \eqref{MCLPNW_O}, its upper bound $z_{\LP}$  in \eqref{MCLPNW_LP} is generally much larger; see \cref{sec:effIDT} further ahead.
Indeed, in contrast to the case with an integral point $y \in \Y$ where $ \min\{1,y(\CI_j) \} =\max_{i \in \CI_j} y_i$ holds for all $j \in \N$, for the case with a fractional point $y \in \Y_{\LIN}$, the term $\min\{1,y(\CI_j) \} $ could be much larger than the term $\max_{i \in \CI_j} y_i$ for $j \in \N$.
Hence, for a fractional point $y \in \Y_{\LIN}$, the objective value $\sum_{j\in \CJ} w_j \cdot \min \{1, y(\CI_j) \}$ of problem \eqref{MCLPNW_O} could be much smaller than the \rev{objective value} $\sum_{j\in \N}w_j \cdot \max_{i \in \CI_j}y_i+
\sum_{j\in \CJ\backslash \N} w_j \cdot \min \{1,  y(\CI_j) \}$ of problem \eqref{MCLPNW_LP} (as $w_j < 0$ for $j \in \N$), leading to a poor \LP relaxation bound $z_{\LP}$.  
The following example further illustrates this weakness.
\begin{example}\label{exam_lp}
	Consider a toy example of the \MCLPNW with $p=1$.  
	There are two customers that can potentially be covered by all candidate facility locations in $\CI$. 
	The two customers have weights $\frac{|\CI|+1}{|\CI|}$ and $-1$, respectively. 
	For this example, formulation \eqref{mclp-nw1} can be expressed as follows:
	\begin{equation}\label{MCLPNW_exm}
		z = \max_{(x, y)\in \{0, 1\}^2 \times \{0, 1\}^{|\CI|}} \left\{\frac{|\CI|+1}{|\CI|}x_1-x_2\, :\, y(\CI) = 1, \, y(\CI) \geq x_1,~x_2 \geq y_i, \, \forall~i \in \CI\right\}.
	\end{equation}
By \cref{theo_opt}, problem \eqref{MCLPNW_exm} and its \LP relaxation reduce to 
	\begin{align}
		& z = \max_{y\in \{0, 1\}^{|\CI|}} \left\{ \frac{|\CI|+1}{|\CI|} \min\{1, y(\CI) \}-\min\{1, y(\CI) \} \,:\, y(\CI) = 1 \right\},	\label{exm-ip}\\
		& z_{\text{\rm\LP}} = \max_{y\in [0, 1]^{|\CI|}}\left\{ \frac{|\CI|+1}{|\CI|} \min\{1, y(\CI) \}-\max_{i \in \CI} y_i \,:\, y(\CI) = 1\right\}.\label{ex-lp}
	\end{align}
	It is easy to see that (i) $z=\frac{1}{|\mathcal{I}|}$ where an optimal solution of \eqref{exm-ip} could be $\hat{y}=(1, 0, \ldots, 0)$; 
	and (ii) $z_{\text{\rm\LP}} =1$ where the only optimal solution of \eqref{ex-lp} is $\bar{y}=\left(\frac{1}{|\CI|}, \frac{1}{|\CI|}, \ldots, \frac{1}{|\CI|}\right)$.
	Thus,  when $|\CI|\rightarrow +\infty$, $\max_{i \in \CI} \bar{y}_i =\frac{1}{|\CI|} \ll 1 = \min\{1, \bar{y}(\CI) \}$, and 
	$\frac{z_{\text{\rm \LP}}}{z}=|\CI|$ goes to infinity.
	This example shows that in a very special and simple case,  the integrality gap of the \LP relaxation of formulation \eqref{mclp-nw1} could be infinity.
\end{example}
\begin{remark}
	Similar to the classic \MCLP, 
	\begin{equation}
	z_{\text{\rm R}} = \max_{y\in \Y_{\LIN}}\left\{\sum_{j\in \CJ} w_j \cdot \min \{1, y(\CI_j) \}\right\}\label{Eq:tmp}
	\end{equation}
	can also provide an upper bound for problem \eqref{MCLPNW_O}, which is tighter than $z_{\text{\rm \LP}}$ given in \eqref{MCLPNW_LP}.
	Unfortunately, \rev{unlike} $z_{\text{\rm \LP}}$ which can be computed by solving a polynomial-time compact \LP problem {\rm(}i.e., the \LP relaxation of formulation \eqref{mclp-nw1}{\rm)}, the computation for $z_{\text{\rm R}}$ is difficult. 
	In particular, it is unclear whether with the presence of negative customer weights $w_j$, $j \in \N$, problem \eqref{Eq:tmp} can still be represented as a compact \LP problem.
\end{remark}

It is well known that state-of-the-art \MIP solvers employ the branch-and-cut algorithmic framework, which implements various valid inequalities such as clique \citep{Atamturk2000}, zero-half \citep{Caprara1996}, and Gomory mixed integer (\GMI) inequalities \citep{Gomory1960} to strengthen the \LP relaxation of the \MIP problems.
However, as shown in \cref{sec:effIDT}, these inequalities, although valid for general \MIP problems, cannot effectively strengthen the \LP relaxation of the \MIP formulation of the \MCLPNW\footnote{In Appendix B of the online supplement, we present the computational results of using the recent learning-based \GMI inequalities of \cite{Chetelat2023} to solve the \MCLPNW. However, the results show that the learning-based \GMI inequalities, as other valid inequalities for general \MIP problems, also cannot effectively strengthen the \LP relaxation of the \MIP formulation of the \MCLPNW.}.

In summary, the presence of negative customer weights $w_j < 0$, $j \in \N$, could lead to a large problem size and a poor \LP relaxation, thereby \rev{making  state-of-the-art \MIP-based approaches inefficient to solve formulation \eqref{mclp-nw1}}. 
In the following three sections, we will develop customized presolving methods and cutting planes to overcome these two weaknesses.

\section{Isomorphic aggregation}\label{isomo}
Two customers $j$ and $r$ are called \emph{isomorphic} if they can be covered by the same candidate facility locations (i.e., $\CI_j = \CI_r$).
For two isomorphic customers $j$ and $r$, 
from \cref{keyobs}, there must exist an optimal solution $(x^*, y^*)$ of formulation \eqref{mclp-nw1} such that 
\begin{equation*}
			x^*_j = \min \{1,  y^*(\CI_j)\}
		~\text{and}~x^*_r = \min \{1, y^*(\CI_r) \}.
\end{equation*}
Then, it follows from $\CI_j = \CI_r$ that $x^*_j = x^*_r$.
Using this argument, we obtain
\begin{remark}\label{iso_remark}
	If $\CI_j = \CI_r$ holds for some distinct $j$ and $r$, then setting 
	$x_j = x_r$ does not change the optimal value of formulation \eqref{mclp-nw1}.
\end{remark}

By \cref{iso_remark}, we can remove variable  $x_r$ (or $x_j$) and the related constraints from formulation \eqref{mclp-nw1}.
This enables us to derive a presolving method, called isomorphic aggregation, to reduce the problem size of formulation \eqref{mclp-nw1}.
Let $\CI_{j_1}, \CI_{j_2}, \cdots, \CI_{j_\tau}$ be all distinct sets  in $\{\CI_j\}_{j \in \CJ}$, where $j_1, j_2,\ldots, j_\tau \in \CJ$ and $\tau \in \mathbb{Z}_+$. 
For each $k \in \CJ':= \{j_1, j_2, \ldots, j_{\tau}\}$, define $ \CJ_k := \{j \in \CJ \,:\, \CI_j = \CI_k\}$. 
By definition, the sets $\CJ_k$, $k \in \CJ'$, form a partition of $\CJ$.
After applying the isomorphic aggregation, there only exist $|\CJ'|$ customers in the (equivalently) reduced problem and each customer $k\in \CJ'$ has a weight $w'_k := w(\CJ_k)$.

\rev{The isomorphic aggregation generalizes the presolving technique P2 in \cref{sec:formulation} which only considers the aggregation of isomorphic customers with nonnegative weights. 
For the classic \MCLP \citep{Church1974} where all customers have nonnegative weights, the isomorphic aggregation has been shown to effectively reduce the problem size and improve the solution efficiency \citep{Chen2023b}.}
However, to the best of our knowledge, a detailed analysis of how the isomorphic aggregation affects the \LP relaxation is missing in the literature (even for the classic \MCLP).
In the following, we will analyze how this presolving method improves the \LP relaxation of the \MIP formulation \eqref{mclp-nw1} of the \MCLPNW.

Let $\N' \subseteq \CJ'$ be the set of customers  with a negative weight.
Since the formulation of the reduced problem is still a form of \eqref{mclp-nw1}, by \cref{theo_opt}, the relaxation of the reduced \MCLPNW reads
\begin{equation}\label{MCLPNW_LP2}
	z'_{\text{\rm\LP}} = \max_{y\in \Y_{\text{\rm\LIN}}}\left\{\sum_{k\in \N'}w'_k \cdot \max_{i \in \CI_k}y_i+
	\sum_{k\in \CJ'\backslash \N'} w'_k \cdot \min\left\{1,  y(\CI_k)\right\} \right\}.
\end{equation}
Let 
\begin{align}
	&z (y) =\sum_{j\in \N}w_j \cdot \max_{i \in \CI_j}y_i +
	\sum_{j\in \CJ\backslash \N} w_j \cdot \min\{1, \ y(\CI_j)\} 
	,\label{objz}\\
	& z'(y) =\sum_{k\in \N'} {w}'_k \cdot \max_{i \in \CI_k	}y_i+
	 \sum_{k\in \CJ'\backslash \N'} {w}'_k \cdot \min\{1, y(\CI_k)\} , \label{objz0}
\end{align}
be the objective functions of problems \eqref{MCLPNW_LP} and \eqref{MCLPNW_LP2}, respectively,
and let
\begin{equation*}
	\CP_{k} =\CJ_k \backslash \N~\text{for}~k \in \N'~\text{and}~\N_k = \CJ_k\cap \N ~\text{for}~k \in \CJ'\backslash \N'.
\end{equation*}
By the above definitions,
the customers in $\CP_k$, $k \in \N'$, have nonnegative weights (in the original problem) but will be aggregated to a customer with a negative weight (in the reduced problem); and 
the customers in $\N_k$, $k \in \CJ'\backslash \N'$, have negative weights (in the original problem)  but will be aggregated to a customer with a nonnegative weight (in the reduced problem).
To characterize how the  isomorphic aggregation improves the \LP relaxation bound, we need the following result.
\begin{theorem}\label{keyiso_thm}
	Let $y \in \Y_{\text{\rm \LIN}}$ and $f_k(y) = \min\{ 1, y(\CI_k) \} - \max_{i \in \CI_k}y_i$, $k \in \CJ'$.
	Then $f_k (y) \geq 0$ for $k \in \CJ'$ and
	\begin{equation}\label{diffz}
		z(y)  -z'(y)= \sum_{k \in \N'}|w(\CP_k)| f_{k}(y)+ \sum_{k \in \CJ' \backslash \N'} |w(\N_k)| f_{k}(y)\geq 0.
	\end{equation}
\end{theorem}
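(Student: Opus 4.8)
The plan is to establish the pointwise inequality $f_k(y) \geq 0$ first, and then to derive the identity \eqref{diffz} by regrouping the terms of $z(y)$ and $z'(y)$ according to the partition $\{\CJ_k\}_{k \in \CJ'}$ of $\CJ$.

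For the first claim, fix $k \in \CJ'$ and $y \in \Y_{\text{\rm\LIN}}$. Since each coordinate satisfies $0 \leq y_i \leq 1$, we have $\max_{i \in \CI_k} y_i \leq 1$ and, because every $y_i \geq 0$, also $\max_{i \in \CI_k} y_i \leq \sum_{i \in \CI_k} y_i = y(\CI_k)$. Combining these two bounds gives $\max_{i \in \CI_k} y_i \leq \min\{1, y(\CI_k)\}$, i.e., $f_k(y) \geq 0$.

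For the identity, the key observation is that every $j \in \CJ_k$ satisfies $\CI_j = \CI_k$ by definition of the partition, so both $\max_{i \in \CI_j} y_i$ and $\min\{1, y(\CI_j)\}$ depend only on the block index $k$; I would write them as $M_k$ and $S_k$ respectively, so that $f_k(y) = S_k - M_k$. I would then regroup $z(y)$ from \eqref{objz} block-by-block: within $\CJ_k$, the negative-weight customers (those in $\CJ_k \cap \N$) contribute $w(\CJ_k \cap \N)\, M_k$ and the nonnegative-weight customers (those in $\CJ_k \backslash \N$) contribute $w(\CJ_k \backslash \N)\, S_k$. The aggregated objective $z'(y)$ in \eqref{objz0} instead assigns to each block the single term $w'_k M_k$ when $k \in \N'$, or $w'_k S_k$ when $k \in \CJ' \backslash \N'$, where $w'_k = w(\CJ_k) = w(\CJ_k \cap \N) + w(\CJ_k \backslash \N)$.

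Taking the per-block difference then splits into two cases. For $k \in \N'$ the $M_k$ terms cancel and the residual is $w(\CJ_k \backslash \N)(S_k - M_k) = |w(\CP_k)|\, f_k(y)$, using $\CP_k = \CJ_k \backslash \N$ and $w(\CP_k) \geq 0$. For $k \in \CJ' \backslash \N'$ the $S_k$ terms cancel and the residual is $w(\CJ_k \cap \N)(M_k - S_k) = |w(\N_k)|\, f_k(y)$, using $\N_k = \CJ_k \cap \N$ and $w(\N_k) \leq 0$. Summing over $k \in \CJ'$ yields \eqref{diffz}, and nonnegativity is immediate since $f_k(y) \geq 0$ and both coefficients are nonnegative. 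The only delicate point is the sign bookkeeping—matching each block's nonnegative and negative weight mass to the treatment ($S_k$ versus $M_k$) it receives before and after aggregation—but this becomes routine once the per-block decomposition is in place.
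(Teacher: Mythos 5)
Your proof is correct and follows essentially the same route as the paper's: both regroup the objectives block-by-block over the partition $\{\CJ_k\}_{k \in \CJ'}$, use $\CI_j = \CI_k$ for $j \in \CJ_k$ to identify the min/max terms within each block, cancel the common terms, and convert $w(\CP_k) \geq 0$ and $w(\N_k) \leq 0$ into absolute values. Your $M_k$/$S_k$ notation is just a cleaner packaging of the paper's substitution of its expansions \eqref{tmpeq1}--\eqref{tmpeq2} into \eqref{objz0}.
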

\begin{proof}
	By $y \in \Y_{\LIN}$, we have $y\in[0,1]^{|\CI|}$ and thus $f_{k}(y) \geq 0$, $k \in \CJ'$.
	For $k \in \N'$, using $w'_k = \sum_{j \in \CJ_k}w_j$ and $\CI_j = \CI_k$ for $j \in \CJ_k$, we obtain
	\begin{equation}\label{tmpeq1}
		w'_k \cdot \max_{i \in \CI_k} y_i = \sum_{j \in \CJ_k} w_j \cdot \max_{i \in \CI_j} y_i = \sum_{j \in\CJ_k \backslash \CP_k} w_j \cdot \max_{i \in \CI_j} y_i + \sum_{j \in \CP_k} w_j \cdot \max_{i \in \CI_j} y_i.
	\end{equation}
	Similarly, for $k\in \CJ'\backslash \N'$, we have
	\begin{equation}\label{tmpeq2}
		w'_k \cdot \min\{1, y(\CI_k)\}=\sum_{j \in \CJ_k} w_j \cdot \min\{1, y(\CI_j)\}=   \sum_{j \in\N_k} w_j \cdot \min\{1, y(\CI_j)\} + \sum_{j \in \CJ_k\backslash\N_k}w_j \cdot  \min\{1, y(\CI_j)\}.
	\end{equation}
	Substituting \eqref{tmpeq1}--\eqref{tmpeq2} into \eqref{objz0} and using \eqref{objz}, we have
	\begin{equation*}
		\begin{aligned}
			z(y) - z'(y)
			= & \sum_{k \in \N'} \sum_{j \in \CP_k} w_j \cdot \left(\min\{ 1, y(\CI_j) \} - \max_{i \in \CI_j}y_i\right) - \sum_{k \in \CJ'\backslash \N'} \sum_{j \in \N_k} w_j \cdot \left(\min\{ 1, y(\CI_j) \} - \max_{i \in \CI_j}y_i\right) \\
			= & \sum_{k \in \N'} w(\CP_k) \left(\min\{ 1, y(\CI_k) \} - \max_{i \in \CI_k}y_i\right) - \sum_{k \in \CJ'\backslash \N'} w(\N_k) \left(\min\{ 1, y(\CI_k) \} - \max_{i \in \CI_k}y_i\right) \\
			= & \sum_{k \in \N'}w(\CP_k) f_{k}(y)- \sum_{k \in \CJ' \backslash \N'} w(\N_k) f_{k}(y) \\
			= & \sum_{k \in \N'}|w(\CP_k)| f_{k}(y)+ \sum_{k \in \CJ' \backslash \N'} |w(\N_k)| f_{k}(y)\geq 0.\qedhere
		\end{aligned}
	\end{equation*}
\end{proof}
Using \cref{keyiso_thm}, we can give conditions under which $z_{\LP}=z'_{\LP}$ holds.
Specifically, if $\N=\varnothing$, i.e., the case that all customers have nonnegative weights \citep{Church1974}, then it follows $\N_k=\varnothing$ for $k\in \CJ' \backslash \N'$ and $\N'=\varnothing$;
and if all customers have negative weights \citep{Church1976}, i.e., $\CJ\backslash\N=\varnothing$, then it follows $\CP_k=\varnothing$ for $k \in \N'$ and $\CJ'\backslash\N'=\varnothing$.
In both cases, it follows from \eqref{diffz} that $z(y)=z'(y)$ holds for all $y \in \Y_\LIN$.
As a result,  
\begin{corollary}\label{coro_isoagg}
	If $\N=\varnothing$ or $\CJ \backslash\N=\varnothing$, then $z_{\text{\rm \LP}}= z'_{\text{\rm \LP}}$, where $z_{\text{\rm \LP}}$ and  $z'_{\text{\rm \LP}}$ are defined in \eqref{MCLPNW_LP} and \eqref{MCLPNW_LP2}, respectively. 
\end{corollary}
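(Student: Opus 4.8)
The plan is to reduce the claim to the pointwise identity $z(y) = z'(y)$ on $\Y_{\LIN}$, from which equality of the two bounds is immediate. Comparing \eqref{MCLPNW_LP} with the definition of $z(y)$ in \eqref{objz}, and \eqref{MCLPNW_LP2} with $z'(y)$ in \eqref{objz0}, I read off $z_{\LP} = \max_{y \in \Y_{\LIN}} z(y)$ and $z'_{\LP} = \max_{y \in \Y_{\LIN}} z'(y)$. Since both maxima are taken over the same feasible set $\Y_{\LIN}$, it suffices to show that under either hypothesis the two objective functions agree at every $y \in \Y_{\LIN}$; taking the maximum then yields $z_{\LP} = z'_{\LP}$. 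For the pointwise comparison I would invoke \cref{keyiso_thm} directly, which gives $z(y) - z'(y) = \sum_{k \in \N'}|w(\CP_k)| f_k(y) + \sum_{k \in \CJ'\backslash\N'}|w(\N_k)| f_k(y)$, a sum of nonnegative terms. The goal is thus to show that under each hypothesis every term of both sums drops out, either because the index set is empty or because the coefficient $|w(\CP_k)|$ or $|w(\N_k)|$ vanishes.

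I would then split into the two cases. When $\N = \varnothing$, every original weight is nonnegative, so each aggregated weight $w'_k = w(\CJ_k)$ is a sum of nonnegative numbers and is nonnegative; hence no aggregated customer is negative, i.e.\ $\N' = \varnothing$, which empties the first sum, while $\N_k = \CJ_k \cap \N = \varnothing$ forces $|w(\N_k)| = 0$ in the second. When $\CJ\backslash\N = \varnothing$, every original weight is negative, so each $w'_k = w(\CJ_k)$ is a sum over the nonempty set $\CJ_k$ of strictly negative numbers and is strictly negative; hence every aggregated customer is negative, $\CJ'\backslash\N' = \varnothing$ empties the second sum, while $\CP_k = \CJ_k\backslash\N = \varnothing$ forces $|w(\CP_k)| = 0$ in the first. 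In both cases the right-hand side of \eqref{diffz} is identically zero on $\Y_{\LIN}$, so $z(y) = z'(y)$ there and therefore $z_{\LP} = z'_{\LP}$.

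There is no genuine obstacle once \cref{keyiso_thm} is available; the corollary is essentially a specialization of \eqref{diffz}. The only point demanding care is the sign bookkeeping on the aggregated weights — specifically, using that each block $\CJ_k$ of the partition is nonempty (by the partition construction preceding the theorem) to conclude that a sum of strictly negative weights stays strictly negative, which is exactly what makes $\CJ'\backslash\N' = \varnothing$ in the second case. I would flag this nonemptiness explicitly so that the emptiness of the relevant index sets is fully justified rather than assumed.
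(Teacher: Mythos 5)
Your proof is correct and takes essentially the same route as the paper: both specialize \cref{keyiso_thm}, noting that $\N'=\varnothing$ and $\N_k=\varnothing$ when $\N=\varnothing$, and that $\CJ'\backslash\N'=\varnothing$ and $\CP_k=\varnothing$ when $\CJ\backslash\N=\varnothing$, so the right-hand side of \eqref{diffz} vanishes identically on $\Y_{\text{\rm\LIN}}$ and the two maxima coincide. Your explicit remark on the nonemptiness of the blocks $\CJ_k$ (needed to conclude that aggregated weights stay strictly negative in the second case) is a minor refinement that the paper leaves implicit.
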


Using \cref{keyiso_thm}, it is also possible to give conditions under which the  isomorphic aggregation can improve the \LP relaxation bound, as detailed in the following corollary.
\begin{corollary}\label{theo_isoagg}
	Let $z_{\text{\rm \LP}}$ and  $z'_{\text{\rm \LP}}$ be defined in \eqref{MCLPNW_LP} and \eqref{MCLPNW_LP2}, respectively, and $y^*$ be an optimal solution of problem \eqref{MCLPNW_LP2}. 
	Then 
	\begin{equation}
		z_\text{\rm \LP}  -z'_\text{\rm \LP} \geq  \sum_{k \in \N'}|w(\CP_k)| f_{k}(y^*)+\sum_{k \in \CJ' \backslash \N'} |w(\N_k)| f_{k}(y^*).
	\end{equation}
 	Moreover, if (i)  $|w(\CP_k)| > 0$ and $f_k(y^*) >0$ hold for some $k \in \N'$, or (ii) $|w(\N_k)| > 0$ and $f_k (y^*)> 0$ hold for some $k \in \CJ' \backslash \N'$, then $z_{\text{\rm \LP}} >
 	z'_{\text{\rm \LP}}$.
\end{corollary}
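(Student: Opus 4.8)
The plan is to obtain this corollary as an immediate consequence of \cref{keyiso_thm}, exploiting the fact that both $z_{\text{\rm \LP}}$ in \eqref{MCLPNW_LP} and $z'_{\text{\rm \LP}}$ in \eqref{MCLPNW_LP2} are obtained by maximizing their respective objectives $z(\cdot)$ and $z'(\cdot)$ over the \emph{same} feasible region $\Y_{\text{\rm \LIN}}$. The only structural point that matters is that the isomorphic aggregation changes the objective (by collapsing customers and relabelling weights) but leaves the $y$-feasible set untouched, so that any point admissible for one problem is admissible for the other.

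First I would record the two elementary facts that drive the argument. Since $y^*$ is an optimal solution of problem \eqref{MCLPNW_LP2}, we have $z'_{\text{\rm \LP}} = z'(y^*)$ exactly. Since $y^* \in \Y_{\text{\rm \LIN}}$ is in particular a feasible point of problem \eqref{MCLPNW_LP}, the definition of $z_{\text{\rm \LP}}$ as a maximum yields $z_{\text{\rm \LP}} \geq z(y^*)$. Subtracting the first equality from this inequality gives $z_{\text{\rm \LP}} - z'_{\text{\rm \LP}} \geq z(y^*) - z'(y^*)$.

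Next I would invoke \cref{keyiso_thm} at the point $y = y^*$, which evaluates the difference $z(y^*) - z'(y^*)$ precisely as $\sum_{k \in \N'}|w(\CP_k)| f_{k}(y^*) + \sum_{k \in \CJ' \backslash \N'} |w(\N_k)| f_{k}(y^*)$. Chaining this identity with the inequality from the previous step delivers the claimed lower bound on $z_{\text{\rm \LP}} - z'_{\text{\rm \LP}}$. For the strict inequality, I would note that \cref{keyiso_thm} guarantees $f_k(y^*) \geq 0$ for all $k \in \CJ'$, so every summand above is nonnegative. Under hypothesis (i) or (ii), at least one summand is a product of two strictly positive factors, hence strictly positive; since the remaining summands are nonnegative, the full sum is strictly positive, and combined with the lower bound just established this forces $z_{\text{\rm \LP}} > z'_{\text{\rm \LP}}$.

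I do not anticipate a genuine obstacle, as the statement is essentially \cref{keyiso_thm} read off at a judiciously chosen feasible point. The one step requiring care is the direction of the comparison: it is crucial to evaluate the original objective $z(\cdot)$ at the \emph{reduced} optimizer $y^*$ (rather than at an optimizer of $z_{\text{\rm \LP}}$), since only then does $z_{\text{\rm \LP}} \geq z(y^*)$ together with $z'_{\text{\rm \LP}} = z'(y^*)$ let the per-point difference $z(y^*)-z'(y^*)$ bound the gap between the two optimal values from below. Using an optimizer of $z_{\text{\rm \LP}}$ instead would point the inequality the wrong way.
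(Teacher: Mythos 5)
Your proposal is correct and follows exactly the route the paper intends: the corollary is read off from \cref{keyiso_thm} by evaluating both objectives at the optimizer $y^*$ of the reduced problem \eqref{MCLPNW_LP2}, using that the two LPs share the feasible set $\Y_{\text{\rm\LIN}}$ so that $z_{\text{\rm\LP}} \geq z(y^*)$ while $z'_{\text{\rm\LP}} = z'(y^*)$, and then invoking the identity \eqref{diffz} together with nonnegativity of each summand for the strict-inequality part. Your closing remark about evaluating at the reduced optimizer rather than at an optimizer of \eqref{MCLPNW_LP} correctly identifies the one directional subtlety in the argument.
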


The following example further illustrates the strength of the isomorphic aggregation. 
\begin{example}[continued]\label{exm_isoagg}
	After applying the isomorphic aggregation to the problem \eqref{MCLPNW_exm} in \cref{exam_lp}, the two customers are aggregated into a single customer with a positive weight $\frac{1}{|\CI|}$, and the \LP relaxation \eqref{MCLPNW_LP2} of the reduced problem reads
	\begin{equation*}
		\begin{aligned}
			z'_{\text{\rm \LP}} = \max_{y \in [0,1]^{|\CI|}} \left\{ \frac{1}{|\CI|} \min\{1, y(\CI) \} \,:\, y(\CI) = 1 \right\}= \frac{1}{|\CI|}=z,
		\end{aligned}
	\end{equation*} 
	where $z$ is defined in \eqref{exm-ip}.
	Thus, in contrast to the \LP relaxation of the original problem where the integrality gap could be infinity {\rm(}as shown in \cref{exam_lp}{\rm)}, the \LP relaxation of the reduced problem is tight.
\end{example}

To summarize, applying the isomorphic aggregation to formulation \eqref{mclp-nw1} of the \MCLPNW, we can obtain an equivalent reduced formulation that not only enjoys a smaller problem size (as the number of customers could become smaller) but also provides a potentially much stronger \LP relaxation (as shown in \cref{theo_isoagg}). 
These two advantages could make the reduced formulation much more computationally solvable by general-purpose \MIP solvers, as will be demonstrated in \cref{sect:num}.

\section{Dominance reduction}\label{sec:dominance}
Next, we derive a presolving method, called dominance reduction, by considering the \emph{dominance relations} between the customers.
A customer $j$ is dominated by a customer $r$ if $\CI_j \subseteq \CI_r$ (i.e., the candidate facility locations that can cover customer $j$ can also cover customer $r$).
Let $\A := \{(j, r) \,:\, j, r \in \CJ~\text{with}~j \neq r~\text{and}~\CI_j \subseteq \CI_r\}$ be the set of all dominance pairs.
For a dominance pair $(j, r) \in \A$, it follows from \cref{keyobs} that there must exist an optimal solution $(x^*, y^*)$ of formulation \eqref{mclp-nw1} such that 
\begin{equation*}
	x^*_j = \min \{1,  y^*(\CI_j)\}~\text{and}~x^*_r = \min \{1, y^*(\CI_r)\},
\end{equation*}
and by $\CI_j \subseteq \CI_r$, we must have $x^*_j \leq x^*_r$. 
Using the above argument, the dominance inequalities 
\begin{equation}\label{defA}
	x_j \leq x_r,~\forall~(j, r) \in \A,
\end{equation}
must be valid for formulation \eqref{mclp-nw1} in the sense that adding it into the formulation does not change the optimal value.
\begin{remark}
Formulation \eqref{mclp-nw1} is equivalent to
\begin{equation}\label{model:domineq}
	\max \left\{\sum_{j\in \CJ}w_j x_j \,:\, \eqref{p-cover}-\eqref{ybinary},~x_j \leq x_r,~\forall~(j, r) \in \A \right\}.
\end{equation}
\end{remark}

Note that if $\CI_j = \CI_r$, then the two dominance inequalities $x_j \leq x_r$ and $x_r \leq x_j$ imply $x_j = x_r$, and therefore, the \LP relaxation of problem \eqref{model:domineq} is at least as strong as the \LP relaxation of the reduced problem returned by the isomorphic aggregation (i.e., problem \eqref{MCLPNW_LP2}).
In the following, we shall show that how the dominance inequalities can be used to further (i) strengthen the \LP relaxation of the formulation \eqref{mclp-nw1} and (ii) perform reductions on removing some constraints from formulation \eqref{mclp-nw1}.

\subsection{Strengthening the \LP relaxation}\label{sec:slr}

Let 
\begin{equation}\label{cons:posneg}
	x_j \leq x_r,~\forall~(j, r) \in {\A}^{+-} := \{(j, r) \in \A \,:\, j \in \CJ\backslash\N, r \in \N\},
\end{equation} 
be a subset of the dominance inequalities in \eqref{defA}.
In other words, each inequality in \eqref{cons:posneg} corresponds to a dominance pair $(j, r)$, where $j$ is a customer with a nonnegative weight and $r$ is a customer with a negative weight.
We first demonstrate that in order to use the dominance inequalities in \eqref{defA} to strengthen the \LP relaxation of formulation \eqref{mclp-nw1}, only those in \eqref{cons:posneg} are needed.

To proceed, consider the problem 
\begin{equation}\label{model:domineq1}
	\max \left\{\sum_{j\in \CJ}w_j x_j \,:\, \eqref{p-cover}-\eqref{ybinary},~x_j \leq x_r,~\forall~(j, r) \in{\A}^{+-} \right\}
\end{equation}
and let $(x^*, y^*)$ be an optimal solution of its \LP relaxation.
Define 
\begin{align}
	& p_j = \argmax\{x^*_s \,:\,  s \in \CP(j) \}~\text{where}~\CP(j) = \{s \in \CJ\backslash\N \,:\, (s, j) \in {\A}^{+-}\}~\text{for}~j \in \N \label{pj},\\
	& n_j = \argmin\{x^*_s \,:\,  s \in \N(j) \}~\text{where}~\N(j) = \{s \in \N \,:\, (j, s) \in{\A}^{+-}\}~\text{for}~j \in \CJ\backslash\N.	\label{nj}
\end{align}
If $\CP(j)=\varnothing$, we let $p_j = 0$ and $x^*_{p_j}=0$; and if $\N(j)=\varnothing$, we let $n_j=-1$ and $x^*_{n_j}=1$.
$p_j$ and $n_j$ indeed depend on $x^*$ but we omit this dependence for notations convenience.
Using the above definitions, $0\leq x^*_s \leq x_{p_j}^* \leq x^*_{j}$ holds for all $j \in \N$ and $s \in \CJ\backslash\N$ with $(s,j) \in \A^{+-}$ and $x^*_j \leq x^*_{n_j} \leq x^*_s\leq 1$ holds for all $j \in \CJ\backslash\N$ and $s \in \N$ with $(j,s) \in \A^{+-}$. This, together with the fact that $w_j < 0$ for all $j \in \N$ and $w_j \geq 0$ for all $j \in \CJ\backslash \N$, enables us to characterize the optimal solutions of the \LP relaxation of problem \eqref{model:domineq1}.
\begin{remark}\label{dominianceLemma}
	There exists an optimal solution $(x^*, y^*)$ of the \LP relaxation of problem \eqref{model:domineq1} such that
		\begin{equation}\label{opt2}
		x^*_j =\left\{ \begin{array}{ll} 
			\max\left\{\max_{i \in \CI_j} y^*_i, x^*_{p_j} \right\}, & \text{if}~j \in \N;\\
			\min\{y^*(\CI_j), x^*_{n_j} \},&\text{otherwise},
		 \end{array} \right.
		 \forall~j \in \CJ.
	\end{equation}
\end{remark}

\noindent The following theorem shows that problems \eqref{model:domineq} and  \eqref{model:domineq1} provide the same \LP relaxation bound.
\begin{theorem}\label{prop:addpnineq}
	The \LP relaxations of problems \eqref{model:domineq}  and \eqref{model:domineq1} are equivalent in terms of sharing the same optimal value.
\end{theorem}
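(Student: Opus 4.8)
The plan is to prove the equality by bounding the two optimal values in both directions. One direction is immediate: since $\A^{+-}\subseteq\A$, problem \eqref{model:domineq} is obtained from \eqref{model:domineq1} by imposing additional dominance inequalities, so the feasible region of the \LP relaxation of \eqref{model:domineq} is contained in that of \eqref{model:domineq1}, and the maximization objective can only decrease. The content of the theorem is the reverse inequality, and I would establish it by exhibiting a \emph{single} optimal solution of the \LP relaxation of \eqref{model:domineq1} that already satisfies every dominance inequality in $\A$; such a point is then feasible for the \LP relaxation of \eqref{model:domineq} while attaining the optimal value of \eqref{model:domineq1}, forcing the two optima to coincide. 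The natural candidate is the optimal solution $(x^*,y^*)$ whose closed form is given in \cref{dominianceLemma}, and the whole argument reduces to verifying $x^*_j\leq x^*_r$ for each $(j,r)\in\A$, which I would organize by splitting $\A$ according to the signs of $j$ and $r$ into $\A^{++}$, $\A^{+-}$, $\A^{-+}$, and $\A^{--}$. Pairs in $\A^{+-}$ hold by feasibility of $(x^*,y^*)$ for \eqref{model:domineq1}.

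The two same-sign cases rest on a monotonicity observation driven by $\CI_j\subseteq\CI_r$. For $(j,r)\in\A^{++}$, this inclusion gives $\N(r)\subseteq\N(j)$, so the minimum defining $x^*_{n_j}$ runs over a larger set and $x^*_{n_j}\leq x^*_{n_r}$; together with $y^*(\CI_j)\leq y^*(\CI_r)$ and the min–min formula $x^*_\bullet=\min\{y^*(\CI_\bullet),x^*_{n_\bullet}\}$, this yields $x^*_j\leq x^*_r$. Symmetrically, for $(j,r)\in\A^{--}$ the inclusion gives $\CP(j)\subseteq\CP(r)$, so $x^*_{p_j}\leq x^*_{p_r}$, and combined with $\max_{i\in\CI_j}y^*_i\leq\max_{i\in\CI_r}y^*_i$ and the max–max formula $x^*_\bullet=\max\{\max_{i\in\CI_\bullet}y^*_i,x^*_{p_\bullet}\}$ one again gets $x^*_j\leq x^*_r$. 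I would handle the empty-set conventions ($x^*_{p_j}=0$ when $\CP(j)=\varnothing$, $x^*_{n_j}=1$ when $\N(j)=\varnothing$) as degenerate subcases, where the bounds $0\leq x^*\leq 1$ make the inequalities trivial.

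The hard part is the cross-sign case $\A^{-+}$, where $j\in\N$ contributes the max-type value $x^*_j=\max\{\max_{i\in\CI_j}y^*_i,\,x^*_{p_j}\}$ while $r\in\CJ\backslash\N$ contributes the min-type value $x^*_r=\min\{y^*(\CI_r),\,x^*_{n_r}\}$, so the two formulas have opposite shapes and no monotonicity argument applies directly. Here I would reduce $x^*_j\leq x^*_r$ to the four scalar inequalities obtained by pairing each term of $x^*_j$ with each term of $x^*_r$. Three follow from the chain $\CI_{p_j}\subseteq\CI_j\subseteq\CI_r\subseteq\CI_{n_r}$ together with $0\le y^*\le 1$: namely $\max_{i\in\CI_j}y^*_i\leq y^*(\CI_r)$, then $x^*_{p_j}\leq y^*(\CI_{p_j})\leq y^*(\CI_r)$, and $\max_{i\in\CI_j}y^*_i\leq\max_{i\in\CI_{n_r}}y^*_i\leq x^*_{n_r}$ (using $n_r\in\N$ and the max-type formula for $x^*_{n_r}$). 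The crucial fourth inequality $x^*_{p_j}\leq x^*_{n_r}$ is exactly where the restriction to $\A^{+-}$ becomes essential: transitivity of the inclusions yields $\CI_{p_j}\subseteq\CI_{n_r}$ with $p_j\in\CJ\backslash\N$ and $n_r\in\N$, so $(p_j,n_r)\in\A^{+-}$, and this inequality is already imposed in \eqref{model:domineq1} and hence satisfied by $(x^*,y^*)$. Once all four bounds hold, $x^*_j\leq x^*_r$ follows, completing the check over all of $\A$ and therefore the reverse inequality and the theorem. The key insight to highlight is thus that the ``missing'' dominance pairs in $\A^{-+}$ are enforced automatically, partly by monotonicity of $y^*$ and crucially by a derived $\A^{+-}$ inequality obtained through transitivity of dominance.
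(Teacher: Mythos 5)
Your proof is correct and takes essentially the same route as the paper's: both directions are handled identically, with the nontrivial direction established by verifying the remaining dominance inequalities at the optimal solution characterized in \cref{dominianceLemma}, using the same monotonicity arguments ($\N(r)\subseteq\N(j)$, $\CP(j)\subseteq\CP(r)$) for the same-sign pairs and the same key transitivity step $(p_j,n_r)\in\A^{+-}$ for the cross-sign case. The only difference is organizational: you verify the cross-sign case via the four term-by-term scalar inequalities with the empty-set conventions treated as degenerate remarks, whereas the paper enumerates four emptiness subcases explicitly.
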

\begin{proof}
	Let $o_1$ and $o_2$ be the optimal values of the \LP relaxations of problems \eqref{model:domineq} and \eqref{model:domineq1}, respectively. 
	Clearly, $o_1 \leq o_2$ holds. 
	To show $o_1 \geq o_2$, by \cref{dominianceLemma}, it suffices to show that for an optimal solution $(x^*, y^*)$ of the \LP relaxation of \eqref{model:domineq1} satisfying \eqref{opt2}, it follows 
	$x^*_j \leq x^*_r$ for all $(j, r) \in \A\backslash{\A}^{+-}$.
	We consider the following three cases separately.
	\begin{itemize}
		\item [(i)] $j, r \in \CJ\backslash\N$. 
		It follows from the definitions of $\N(j),~\N(r)$ in \eqref{nj} and $\CI_j \subseteq \CI_r$ that $\N(r) \subseteq \N(j)$, and by \eqref{nj},  $x^*_{n_j} \leq x^*_{n_r}$ holds. 
		Together with $y^*(\CI_j) \leq y^*(\CI_r)$, we obtain 
		\begin{equation*}
			x^*_j  = \min\left\{y^*(\CI_j), x^*_{n_j} \right\} \leq  \min\left\{y^*(\CI_j), x^*_{n_r} \right\} \leq \min\left\{y^*(\CI_r), x^*_{n_r}\right\} =   x^*_r.
		\end{equation*}
		\item [(ii)] $j, r \in \N$.
		It follows from the definitions of $\CP(j),~\CP(r)$ in \eqref{pj} and $\CI_{j} \subseteq \CI_{r}$  that $\CP(j) \subseteq \CP(r)$, and by 
		 \eqref{pj}, $x^*_{p_{j}} \leq x^*_{p_{r}}$ holds. 
		Together with $\max_{i \in \CI_j} y^*_i \leq \max_{i \in \CI_r} y^*_i$, we obtain 
		\begin{equation*}
			x^*_j  = \max\left\{\max_{i \in \CI_j} y^*_i, x^*_{p_{j}} \right\}  \leq  \max\left\{\max_{i \in \CI_j} y^*_i, x^*_{p_{r}} \right\}  \leq \max\left\{\max_{i \in \CI_r} y^*_i, x^*_{p_{r}} \right\}  =   x^*_r.
		\end{equation*}
		\item [(iii)] $j \in \N$ and $r \in \CJ\backslash\N$.
		Since $(j, r) \in \A$, or equivalently, $\CI_j \subseteq \CI_r$, we have $\max_{i \in \CI_j} y^*_i \leq \max_{i \in \CI_r} y^*_i \leq y^*(\CI_r)$. 
		Hence, to show 
		\begin{equation*}
			x^*_j  =  \max\left\{\max_{i \in \CI_j} y^*_i, x^*_{p_j} \right\} \leq \min\left\{y^*(\CI_r), x^*_{n_r} \right\}  =  x^*_r,
		\end{equation*}
		it suffices to prove $\max_{i \in \CI_j} y^*_i \leq x^*_{n_r}$, $ x^*_{p_j} \leq y^*(\CI_r)$, and $ x^*_{p_j} \leq x^*_{n_r}$. We further consider four subcases.
		\begin{itemize}
			\item [1)] $\CP(j)=\varnothing$ and $\N(r)=\varnothing$. In this case, $x_{p_j}^*=0$ and $x_{n_r}^*=1$, and thus $\max_{i \in \CI_j} y^*_i \leq x^*_{n_r}$, $ x^*_{p_j} \leq y^*(\CI_r)$, and $ x^*_{p_j} \leq x^*_{n_r}$ hold.
			\item [2)] $\CP(j)=\varnothing$ and $\N(r)\neq\varnothing$. In this case, $x_{p_j}^*=0$,  and thus $ x^*_{p_j} \leq y^*(\CI_r)$ and $ x^*_{p_j} \leq x^*_{n_r}$ hold. Since $n_r \in \N$, from \eqref{opt2}, we have $x^*_{n_r} \geq \max_{i \in \CI_{n_r}} y^*_i\geq \max_{i \in \CI_j} y^*_i $, where the last inequality follows from $\CI_j \subseteq \CI_r$ and $\CI_r \subseteq \CI_{n_r}$ (as $n_r \in \N(r)$).
			\item [3)] $\CP(j)\neq\varnothing$ and $\N(r)=\varnothing$. In this case, $x^*_{n_r}=1$, and thus $\max_{i \in \CI_j} y^*_i \leq x^*_{n_r}$ and $ x^*_{p_j} \leq x^*_{n_r}$ hold. 
			Since $p_j \in\CJ \backslash \N$, from \eqref{opt2}, we obtain $x_{p_j}^* \leq y^*(\CI_{p_j}) \leq y^*(\CI_r)$, where the last inequality follows from $\CI_j \subseteq \CI_r$ and $\CI_{p_j} \subseteq \CI_{j}$ (as $p_j \in \CP(j)$).
			\item [4)] $\CP(j)\neq\varnothing$ and $\N(r)\neq\varnothing$.
			As $p_j \in \CP(j)\subseteq \CJ\backslash \N$ and $n_r \in \N (r) \subseteq \N$, we have 
			$\CI_{p_j} \subseteq \CI_j$ and $\CI_r \subseteq \CI_{n_r}$, respectively, which
			together with $\CI_j \subseteq \CI_r$, implies
			$\CI_{p_j} \subseteq \CI_{n_r}$ and thus $(p_j, n_r) \in {\A}^{+-}$.
			Therefore, $x^*_{p_j} \leq x_{n_r}^*$ holds.
			The proofs of $\max_{i \in \CI_j} y^*_i \leq x^*_{n_r}$ and $ x^*_{p_j} \leq y^*(\CI_r)$ are similar to those of cases 2) and 3), respectively.
			\qedhere
		\end{itemize}
	\end{itemize}
\end{proof}

\cref{prop:addpnineq} shows that in order to use the dominance inequalities to strengthen the \LP relaxation of formulation \eqref{mclp-nw1}, it suffices to consider those in \eqref{cons:posneg}.
The following theorem further provides a lower bound for the improvement on the \LP relaxation bound by the dominance inequalities in \eqref{cons:posneg}.
\begin{theorem}\label{th:improvelp}
	Let  $(x^*, y^*)$ be an optimal solution of the \LP relaxation of \eqref{model:domineq1} satisfying \eqref{opt2} and $z'_{\text{\rm \LP}}$ be the corresponding objective value.
	Then, 
	\begin{equation}\label{diffdominanceLP}
		\begin{aligned}
			z_{\text{\rm \LP}} - z'_{\text{\rm \LP}}
			\geq \sum_{j\in \N}w_j \cdot \min\left\{0, \max_{i \in \CI_j} y^*_i - x^*_{p_j} \right\} + \sum_{j\in \CJ\backslash \N} w_j \cdot \max\left\{ \min\{1, y^*(\CI_j)\} - x^*_{n_j}, 0 \right\} 
			\geq  ~ 0,
		\end{aligned}
	\end{equation}
	where $z_{\text{\rm\LP}}$ is defined in \eqref{MCLPNW_LP}.
	Moreover,
	if (i) $\max_{i \in \CI_j} y^*_i < x^*_{p_j}$ for some $j \in \N$, or
	(ii) $x^*_{n_j} < \min\{1, y^*(\CI_j)\}$ and $w_j > 0$ for some $j \in \CJ \backslash \N$, 
	then $z_{\text{\rm\LP}} > z'_{\text{\rm\LP}}$.
\end{theorem}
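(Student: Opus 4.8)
The plan is to turn the claim into a purely term-by-term computation built on the explicit formula \eqref{opt2} for $(x^*,y^*)$ and the closed form \eqref{MCLPNW_LP} for $z_{\text{\rm\LP}}$. First I would note that, with $z(\cdot)$ as defined in \eqref{objz}, equation \eqref{MCLPNW_LP} reads $z_{\text{\rm\LP}} = \max_{y \in \Y_{\text{\rm\LIN}}} z(y)$. Since $(x^*, y^*)$ is feasible for the \LP relaxation of \eqref{model:domineq1}, its $y$-part satisfies $y^* \in \Y_{\text{\rm\LIN}}$, so $z_{\text{\rm\LP}} \geq z(y^*)$. This single relaxation step replaces the hard-to-control optimum $z_{\text{\rm\LP}}$ by the concrete value $z(y^*)$ and is exactly what makes the conclusion an inequality rather than an equality. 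It then suffices to show that $z(y^*) - z'_{\text{\rm\LP}}$ equals the displayed middle expression of \eqref{diffdominanceLP}.

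Next, writing $z'_{\text{\rm\LP}} = \sum_{j\in\CJ} w_j x^*_j$ and inserting \eqref{opt2}, I would subtract termwise from $z(y^*)$ and split over $j \in \N$ and $j \in \CJ\backslash\N$. For $j \in \N$ the contribution is $w_j\bigl(\max_{i\in\CI_j} y^*_i - \max\{\max_{i\in\CI_j} y^*_i, x^*_{p_j}\}\bigr)$, and the elementary identity $a - \max\{a,b\} = \min\{0, a-b\}$ collapses it to $w_j \min\{0, \max_{i\in\CI_j} y^*_i - x^*_{p_j}\}$. For $j \in \CJ\backslash\N$ the contribution is $w_j\bigl(\min\{1, y^*(\CI_j)\} - \min\{y^*(\CI_j), x^*_{n_j}\}\bigr)$; the one point needing care is that $x^*_{n_j} \leq 1$ always holds (it equals $1$ when $\N(j) = \varnothing$, and otherwise $n_j \in \N$ is an \LP variable bounded by $1$). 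This bound lets me rewrite $\min\{y^*(\CI_j), x^*_{n_j}\} = \min\{\min\{1, y^*(\CI_j)\}, x^*_{n_j}\}$ and then apply $c - \min\{c,d\} = \max\{c-d, 0\}$ with $c = \min\{1, y^*(\CI_j)\}$, obtaining $w_j \max\{\min\{1, y^*(\CI_j)\} - x^*_{n_j}, 0\}$. Summing over both index sets reproduces exactly the middle expression of \eqref{diffdominanceLP}.

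Finally I would read off the signs. For $j \in \N$ the factor $\min\{0,\cdot\}$ is nonpositive while $w_j < 0$, so each term of the first sum is $\geq 0$; for $j \in \CJ\backslash\N$ the factor $\max\{\cdot,0\}$ is nonnegative while $w_j \geq 0$, so each term of the second sum is $\geq 0$. This gives the rightmost inequality of \eqref{diffdominanceLP}, and chaining with $z_{\text{\rm\LP}} \geq z(y^*) = z'_{\text{\rm\LP}} + (\text{RHS})$ yields $z_{\text{\rm\LP}} - z'_{\text{\rm\LP}} \geq 0$. For strictness, under hypothesis (i) the $j$-th term of the first sum equals $w_j\bigl(\max_{i\in\CI_j} y^*_i - x^*_{p_j}\bigr) > 0$, and under hypothesis (ii) the $j$-th term of the second sum equals $w_j\bigl(\min\{1, y^*(\CI_j)\} - x^*_{n_j}\bigr) > 0$; since every term of the bound is nonnegative, one strictly positive term forces the whole right-hand side, and hence $z_{\text{\rm\LP}} - z'_{\text{\rm\LP}}$, to be strictly positive. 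I do not expect a genuine obstacle; the only subtlety is bookkeeping the bound $x^*_{n_j} \leq 1$ so that the second min/max identity is applied to the truncated quantity $\min\{1, y^*(\CI_j)\}$ rather than to $y^*(\CI_j)$ directly.
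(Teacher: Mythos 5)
Your proof is correct and follows essentially the same route as the paper's: bound $z_{\text{\rm\LP}}$ below by the objective of \eqref{MCLPNW_LP} evaluated at the feasible point $y^*$, express $z'_{\text{\rm\LP}}$ termwise via \eqref{opt2}, and subtract. The only difference is that you make explicit the elementary min/max identities (and the bookkeeping that $x^*_{n_j} \leq 1$, needed to truncate $\min\{y^*(\CI_j), x^*_{n_j}\}$ to $\min\{\min\{1, y^*(\CI_j)\}, x^*_{n_j}\}$) which the paper's "combining \eqref{tmpeqa} and \eqref{tmpeqb}" step leaves implicit.
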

\begin{proof}
	Clearly, $y^*$ is a feasible  solution of problem \eqref{MCLPNW_LP}, and thus 
	\begin{equation}\label{tmpeqa}
		z_{\LP}\geq   \sum_{j\in \N}w_j \cdot \max_{i \in \CI_j}y^*_i + 
		\sum_{j\in \CJ\backslash \N} w_j \cdot \min \{1,  y^*(\CI_j) \}.
	\end{equation}
	From \eqref{opt2}, we have
	\begin{equation}\label{tmpeqb}
		z'_{\LP}= \sum_{j\in \N}w_j \cdot \max\left\{\max_{i \in \CI_j} y^*_i, x^*_{p_j} \right\}+
		\sum_{j\in \CJ\backslash \N} w_j \cdot  \min\left\{y^*(\CI_j), x^*_{n_j}\right\}.
	\end{equation}
	Combining \eqref{tmpeqa} and \eqref{tmpeqb}, we obtain \eqref{diffdominanceLP}.
	The proof of the second part is obvious.
\end{proof}

We use the following example to show that the condition in \cref{th:improvelp} could be satisfied, and demonstrate the potential of the dominance inequalities \eqref{cons:posneg} in strengthening the \LP relaxation of formulation \eqref{mclp-nw1}.
\begin{example}\label{domineq}
	Consider an example of the \MCLPNW where $p = 1$ and there exist two customers and three candidate facility locations.
	The weights of the two customers are $w_1 = 1$ and $w_2 = -1$, and $\CI_1 = \{1, 2\}$ and $\CI_2 = \{1, 2, 3\}$.
	As $\CI_1 \subseteq \CI_2$, the \LP relaxation of \eqref{model:domineq1} reads
	\begin{equation*}
		z'_{\rm LP} = \max_{(x, y)\in [0, 1]^2 \times [0, 1]^3}\left\{x_1 - x_2 \,:\, y_1 +  y_2 + y_3 = 1,\,y_1+y_2 \geq x_1,\,x_2 \geq y_1,\,x_2 \geq y_2,\,x_2 \geq y_3,\,x_1 \leq x_2 \right\}.
	\end{equation*}
	It is simple to see that  $(x^*, y^*) = (\frac{1}{3}, \frac{1}{3}, \frac{1}{3}, \frac{1}{3}, \frac{1}{3})$ is an optimal solution with the objective value $0$.
	By $\max_{i \in \CI_2} y_i^*  - x^*_1 = 0$, $\min\{1, y^*(\CI_1)\} - x^*_2 = \frac{1}{3}$, $w_1 = 1$, and \cref{th:improvelp}, we have $z_{\rm LP} -z'_{\rm LP} \geq \frac{1}{3}$.
\end{example}

\subsection{Constraint reduction}\label{sec:cr}
Let 
\begin{equation}\label{cons:negneg}
	x_j \leq x_r,~\forall~(j, r) \in {\A}^{--} := \{(j, r) \in \A \,:\, j \in \N, ~r \in \N\backslash\{j\}\},
\end{equation}
be another subset of the dominance inequalities in \eqref{defA}.
Each inequality in \eqref{cons:negneg} corresponds to a dominance pair $(j, r)$ where both $j$ and $r$ are customers with negative weights.
Although the inequalities \eqref{cons:negneg} cannot further improve the \LP relaxation of problem \eqref{model:domineq1} (as shown in \cref{prop:addpnineq}), they still hold the potential of eliminating some constraints in \eqref{neg-cover} from the problem.
Indeed, considering  a dominance pair $(j, r) \in {\A}^{--}$, the constraints $x_r \geq y_i$ for $i \in \CI_j\,(\subseteq  \CI_r)$ are implied by constraints $x_j \leq x_r$ and $x_j \geq y_i$ for $i \in \CI_j$.
Therefore, we can add inequality $x_j \leq x_r$ into problem \eqref{model:domineq1} and remove constraints $x_r \geq y_i$ for $i \in \CI_j \subseteq  \CI_r$  from the problem (without weakening its \LP relaxation).

Although the above reduction technique can remove some constraints in \eqref{neg-cover} from problem \eqref{model:domineq1}, it also requires the addition of some inequalities in \eqref{cons:negneg}. 
Therefore, the following question immediately arises: how to choose the dominance inequalities \eqref{cons:negneg} to apply the constraint reduction technique such that the number of constraints in the reduced problem is minimized?
We refer to this problem as problem \CONR. 

\begin{proposition}\label{np-hard}
	Problem {\rm \CONR} is strongly NP-hard.
\end{proposition}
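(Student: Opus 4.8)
The plan is to recast \CONR as a purely combinatorial optimization problem and then reduce a strongly NP-hard graph problem to it. A feasible solution of \CONR is a subset $D\subseteq\A^{--}$ of dominance inequalities to be added; adding $x_j\le x_r$ costs one constraint and, as explained before the statement, allows us to delete every constraint $x_r\ge y_i$ with $i\in\CI_j$ from \eqref{neg-cover}. First I would argue that a constraint $x_r\ge y_i$ is deletable \emph{exactly} when some $(j,r)\in D$ has $i\in\CI_j$: the justifying chain $x_r\ge x_j\ge\cdots\ge y_i$ always terminates at a retained base constraint, because $\CI_{\cdot}$ strictly decreases along the chain (after isomorphic aggregation the sets $\CI_j$ are distinct, so $\subseteq$ is a strict partial order and admits no cycle). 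Hence the number of surviving constraints of target $r$ is $|\CI_r\setminus\bigcup_{(j,r)\in D}\CI_j|$, and the reduced model has
\[
\sum_{r\in\N}|\CI_r| \;+\; |D| \;-\; \sum_{r\in\N}\Bigl|\bigcup_{(j,r)\in D}\CI_j\Bigr|
\]
constraints. Since the first term is a constant, minimizing the constraint count is equivalent to maximizing $\Phi(D)=\sum_{r\in\N}\bigl(|\bigcup_{(j,r)\in D}\CI_j|-|D_r|\bigr)$, where $D_r=\{j:(j,r)\in D\}$, and this objective \emph{decomposes} into one independent coverage-with-penalty problem per target $r\in\N$.

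I would then prove hardness of a single such subproblem by reducing from Maximum Independent Set (equivalently Minimum Vertex Cover), which is strongly NP-hard even on cubic graphs. Given a graph $G=(V,E)$ of minimum degree at least two, I build a \MCLPNW instance whose candidate facility locations are the edges, i.e.\ $\CI=E$; I introduce one negative-weight customer $v_u$ per vertex $u$ with $\CI_{v_u}=\delta(u)$, the set of edges incident to $u$, together with one further negative-weight customer $r$ with $\CI_r=E$. Because every $\deg(u)\ge2$, no $\CI_{v_u}$ is contained in another $\CI_{v_{u'}}$, and neither $\CI_r\subseteq\CI_{v_u}$ holds; therefore $\A^{--}=\{(v_u,r):u\in V\}$ and the only nontrivial subproblem is the one for $r$, namely $\max_{D\subseteq V}\bigl(|\bigcup_{u\in D}\delta(u)|-|D|\bigr)=\max_{D\subseteq V}\bigl(|E_D|-|D|\bigr)$, where $E_D$ denotes the edges meeting $D$.

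The crux is the identity $\max_{D\subseteq V}\bigl(|E_D|-|D|\bigr)=|E|-|V|+\alpha(G)$, where $\alpha(G)$ is the independence number. I would prove it via the substitution $T=V\setminus D$, which reduces the claim to $\max_{T\subseteq V}\bigl(|T|-|E_T|\bigr)=\alpha(G)$ with $E_T$ the edges induced by $T$: the bound ``$\ge$'' is witnessed by a maximum independent set, while ``$\le$'' holds because deleting a vertex cover of $G[T]$ (of size at most $|E_T|$) leaves an independent set of size at least $|T|-|E_T|$. Substituting back, the minimum number of constraints produced by \CONR on this instance equals $2|E|+|V|-\alpha(G)$; as $|E|$ and $|V|$ are fixed, an optimal \CONR solution determines $\alpha(G)$. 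Since all data are $0/1$ and polynomially bounded, the reduction is strongly polynomial, establishing strong NP-hardness. I expect the two technical hearts—and the main obstacles—to be the combinatorial reformulation of the first paragraph (in particular justifying that the deletable set is exactly the union and that chain validity always terminates) and the identity $\max_T\bigl(|T|-|E_T|\bigr)=\alpha(G)$, which is the linchpin that extracts the independent-set structure.
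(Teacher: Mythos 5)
The paper gives no proof of this proposition in the main text---it defers to Appendix~C of its online supplement, which is not part of the source provided---so your argument must be judged on its own merits, and it holds up. Your reformulation of \CONR is sound: for a chosen $D\subseteq\A^{--}$ the reduced model has $\sum_{r\in\N}|\CI_r|+|D|-\sum_{r\in\N}\bigl|\bigcup_{(j,r)\in D}\CI_j\bigr|$ covering-type constraints, and your chain-termination argument (the sets strictly decrease along any justification chain, so every chain ends at a retained constraint $x_j\ge y_i$) is exactly what is needed to certify that all deletions prescribed by $D$ are simultaneously valid. The one caveat is that this argument needs the sets $\CI_j$, $j\in\N$, to be pairwise distinct (i.e., isomorphic aggregation already applied), since $\CI_j=\CI_r$ would otherwise allow a circular justification; your hardness instances satisfy this automatically, so the conclusion is unaffected. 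The reduction itself is also correct: minimum degree $2$ guarantees that no star $\delta(u)$ contains another and that $E\not\subseteq\delta(u)$, hence $\A^{--}=\{(v_u,r):u\in V\}$ and only the subproblem at $r$ is nontrivial; and the identity $\max_{T\subseteq V}\bigl(|T|-|E(G[T])|\bigr)=\alpha(G)$ is proved correctly (a maximum independent set witnesses ``$\ge$'', and removing one endpoint of each edge of $G[T]$ witnesses ``$\le$''). Since the optimum of \CONR on your instance equals $2|E|+|V|-\alpha(G)$, and neither \CONR nor Independent Set carries numerical data beyond polynomially bounded set sizes, the NP-hardness you obtain is automatically strong NP-hardness. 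In short: a complete and correct proof, reducing Maximum Independent Set to \CONR via a prize-collecting-coverage reformulation; whether the authors' supplementary proof uses the same reduction cannot be checked from the material provided.
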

\begin{proof}
The proof can be found in Appendix C of the online supplement.
\end{proof}

\cref{np-hard} implies that unless P=NP, there does not exist a polynomial-time algorithm to select the dominance inequalities in \eqref{cons:negneg} to apply the constraint reduction such that the number of constraints in the reduced problem is minimized.
We therefore develop a heuristic algorithm to achieve a trade-off between the performance and the time complexity.
The idea of the proposed algorithm lies in the fact that for $r \in \CJ$, the subsets $\CI_j$ with more elements are more preferable to be chosen as they can eliminate more constraints of the form $x_r \geq y_i$ (when $\CI_j \subseteq \CI_r$).
To this end, for each $r \in \CJ$, we recursively examine subsets $\CI_j$ according to the descending order of their cardinalities, 
and add the dominance inequality $x_j \leq x_r$ into problem \eqref{model:domineq1} if $\CI_j \subseteq \CI_r$ and at least two constraints of the form $x_r \geq y_i$ can be deleted concurrently. 
This heuristic procedure is summarized in \cref{addnnineq} and the overall complexity is $\mathcal{O}(|\N| \sum_{j \in \N} | \CI_j|)$.

In summary, the dominance reduction uses the dominance inequalities $x_j \leq x_r$ with $(j,r) \in\A^{+-}$ to strengthen the \LP relaxation of formulation \eqref{mclp-nw1} and those with $ (j,r) \in \bar{\A}^{--}$ (constructed by \cref{addnnineq}) to eliminate some constraints in \eqref{neg-cover}.
It is worth remarking that some dominance inequalities $x_j \leq x_r$,  $(j, r) \in \A^{+-} \cup \bar{\A}^{--}$, may be redundant.
In particular, if $(j, r), (r, s), (j, s)\in \A^{+-}\cup \bar{\A}^{--}$, then the dominance inequality $x_j \leq x_s$ is implied by $x_j \leq x_r$ and $x_r \leq x_s$. 
In our implementation of the dominance reduction, only the nonredundant dominance inequalities in $x_j \leq x_r$,  $(j, r) \in \A^{+-}\cup \bar{\A}^{--}$, will be added into formulation \eqref{mclp-nw1}.

\begin{algorithm}[!h]
	\caption{A heuristic algorithm for performing the constraint reduction}\label{addnnineq}
	Initialize $\bar{\A}^{--} \leftarrow \varnothing$ and $\bar{\CI}_j\leftarrow \CI_j$, $j \in \N$\;
	Reorder $\CI_j$, $j \in \N$, such that  $|\CI_{1}| \geq \dots \geq |\CI_{|\N|}|$\;
	\For{$r \leftarrow 1, \ldots, |\N|$}{
		\For{$j \leftarrow r+1, \ldots,|\N|$}{
			\If{$\CI_j \subseteq \CI_r$ and $|\CI_j \cap \bar{\CI}_r| \geq 2$}{
				Delete constraints $x_r \geq y_i$ for $i \in \CI_j \cap \bar{\CI}_r$ and add inequality $x_j \leq x_r$ into problem \eqref{model:domineq1}\;
				Update $\bar{\CI}_r \leftarrow \bar{\CI}_r \backslash \CI_j$ and $\bar{\A}^{--}  \leftarrow \bar{\A}^{--}  \cup \{(j, r)\}$\;
			}
		}
	}
\end{algorithm}

\section{Two-customer inequalities}\label{sec_2link}

In this section, we first present a family of valid inequalities, called two-customer inequalities, for formulation \eqref{mclp-nw1}.
Then, we investigate how  two-customer inequalities improve the \LP relaxation of formulation \eqref{mclp-nw1}, which plays an important role in  the design of the separation algorithm for the considered inequalities.

\subsection{Derived inequalities}

We start with the following result demonstrating that using the optimality condition \eqref{optd1}, a relation between any two distinct customers can be derived.
\begin{proposition}\label{th:2link}
	Let
	$(x^*,y^*)$ be an optimal solution of formulation \eqref{mclp-nw1} satisfying \eqref{optd1} and $j, r\in \CJ$ with $j \neq r$. 
	Then $x^*_j \leq x^*_r + y^*(\CI_j\backslash \CI_r)$ holds.
\end{proposition}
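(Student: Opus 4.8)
The plan is to use the optimality characterization \eqref{optd1} from \cref{keyobs}, which tells us that $x^*_j = \min\{1, y^*(\CI_j)\}$ and $x^*_r = \min\{1, y^*(\CI_r)\}$ for the given optimal solution. The goal is to establish $x^*_j \leq x^*_r + y^*(\CI_j \backslash \CI_r)$, so I would rewrite everything in terms of the $y^*$ variables and reduce the claim to a purely arithmetic inequality about the quantity $y^*(\CI_j)$, $y^*(\CI_r)$, and $y^*(\CI_j \cap \CI_r)$.

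\medskip

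First I would split $\CI_j$ into the disjoint parts $\CI_j \cap \CI_r$ and $\CI_j \backslash \CI_r$, so that $y^*(\CI_j) = y^*(\CI_j \cap \CI_r) + y^*(\CI_j \backslash \CI_r)$. Using $\CI_j \cap \CI_r \subseteq \CI_r$ and $y^* \geq 0$, I have $y^*(\CI_j \cap \CI_r) \leq y^*(\CI_r)$. The key inequality to prove becomes
\begin{equation*}
	\min\{1, y^*(\CI_j)\} \leq \min\{1, y^*(\CI_r)\} + y^*(\CI_j \backslash \CI_r).
\end{equation*}
I would verify this by a short case analysis on whether $y^*(\CI_j)$ and $y^*(\CI_r)$ exceed $1$. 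In the case $y^*(\CI_j) \leq 1$, the left side equals $y^*(\CI_j) = y^*(\CI_j \cap \CI_r) + y^*(\CI_j \backslash \CI_r)$, and since $y^*(\CI_j \cap \CI_r) \leq \min\{1, y^*(\CI_r)\}$ (as it is bounded by both $1$, from $y^*(\CI_j \cap \CI_r) \leq y^*(\CI_j) \leq 1$, and $y^*(\CI_r)$), the inequality follows. In the case $y^*(\CI_j) > 1$, the left side equals $1$, and I would write $1 = \min\{1, y^*(\CI_j \cap \CI_r)\} + \bigl(1 - \min\{1, y^*(\CI_j \cap \CI_r)\}\bigr)$ and bound the remainder by $y^*(\CI_j \backslash \CI_r)$, again using that $\min\{1, y^*(\CI_j \cap \CI_r)\} \leq \min\{1, y^*(\CI_r)\}$.

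\medskip

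The main obstacle, though a mild one, is handling the $\min\{1, \cdot\}$ truncation cleanly across the cases without an unwieldy four-way split; the cleanest route is probably to prove the single elementary fact that for nonnegative reals $a, b$ one has $\min\{1, a+b\} \leq \min\{1, a\} + b$, then apply it with $a = y^*(\CI_r)$ restricted appropriately and $b = y^*(\CI_j \backslash \CI_r)$. I should be careful that the natural substitution uses $y^*(\CI_j \cap \CI_r)$ rather than $y^*(\CI_r)$ inside the first truncation, so I would first note $\min\{1, y^*(\CI_j)\} = \min\{1, y^*(\CI_j \cap \CI_r) + y^*(\CI_j \backslash \CI_r)\} \leq \min\{1, y^*(\CI_j \cap \CI_r)\} + y^*(\CI_j \backslash \CI_r)$ by the elementary fact, and then use monotonicity of $\min\{1, \cdot\}$ together with $y^*(\CI_j \cap \CI_r) \leq y^*(\CI_r)$ to replace the first term by $\min\{1, y^*(\CI_r)\} = x^*_r$. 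This two-line argument avoids the case analysis entirely and is the version I would ultimately write down.
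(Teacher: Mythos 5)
Your proof is correct, but it takes a genuinely different route from the paper's. The paper exploits integrality: since $x^* \in \{0,1\}^{|\CJ|}$, either $x^*_j \leq x^*_r$ (and the claim is trivial), or $x^*_j = 1$ and $x^*_r = 0$, in which case \eqref{optd1} forces $y^*(\CI_j) \geq 1$ and $y^*(\CI_r) = 0$, hence $y^*(\CI_j \backslash \CI_r) \geq 1$ and the inequality follows. Your argument never invokes integrality: you reduce the claim to the elementary fact that $\min\{1, a+b\} \leq \min\{1, a\} + b$ for nonnegative $a, b$, applied with $a = y^*(\CI_j \cap \CI_r)$ and $b = y^*(\CI_j \backslash \CI_r)$, followed by monotonicity of $\min\{1, \cdot\}$ and $y^*(\CI_j \cap \CI_r) \leq y^*(\CI_r)$. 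This buys you something real: your proof shows the two-customer inequality is satisfied by \emph{any} point $(x, y)$ with $y \geq 0$, fractional or not, as long as both $x_j$ and $x_r$ obey the $\min\{1, \cdot\}$ characterization of \eqref{optd1}, whereas the paper's case split collapses without binary $x^*$. Indeed, your manipulation is essentially the one the paper itself deploys later, in case (i) of the proof of \cref{prop:2link} (for $j, r \in \CJ \backslash \N$), and it explains structurally why these inequalities can only cut off fractional LP points through customers in $\N$, whose LP-optimal $x$-values follow the $\max$ rule of \eqref{optd2} rather than the $\min$ rule. What the paper's proof buys in exchange is brevity: three lines, no arithmetic lemma.
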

\begin{proof}
	If $x^*_j \leq x^*_r$, then $x^*_j \leq x^*_r + y^*(\CI_j\backslash \CI_r)$ holds naturally. 
	Otherwise, it follows from $x^* \in \{0,1\}^{|\CJ|}$ that $x^*_j=1 $  and $x^*_r =  0$. Then, using \eqref{optd1}, we obtain $y^*(\CI_j) \geq 1$ and $y^*(\CI_r) = 0$. 
	Consequently,  we have $y^*( \CI_j \backslash \CI_r) \geq 1$, and $x^*_j \leq x^*_r + y^*(\CI_j\backslash \CI_r)$ also holds.
\end{proof}
\noindent \cref{th:2link} enables us to derive a family of inequalities, called \emph{two-customer inequalities}, 
\begin{equation}\label{valid1}
	x_j \leq x_r + y(\CI_j\backslash \CI_r),~\forall~j\in\CJ,~r \in \CJ\backslash \{j\},
\end{equation}
which are valid for formulation \eqref{mclp-nw1} in the sense that adding them into formulation \eqref{mclp-nw1} does not change the optimal value.
 
Notice that if $\CI_j \subseteq \CI_r$, inequality $	x_j \leq x_r + y(\CI_j\backslash \CI_r)$ reduces to the dominance inequality $x_j \leq x_r$, and thus the two-customer inequalities in \eqref{valid1} generalize the dominance inequalities in \eqref{cons:posneg}.
In \cref{ex:twolink} of the next subsection, we show that compared with the dominance inequalities in \eqref{cons:posneg}, the two-customer inequalities in \eqref{valid1} can further strengthen the \LP relaxation of formulation \eqref{mclp-nw1}.

\subsection{How two-customer inequalities strengthen the  \LP relaxation of formulation \eqref{mclp-nw1}}

As demonstrated in \cref{prop:addpnineq}, in order to use the dominance inequalities $x_j \leq x_r$ in \eqref{cons:posneg} to strengthen the \LP relaxation of formulation \eqref{mclp-nw1}, it suffices to consider those with $j \in \CJ \backslash \N$ and $r\in \N$.
This result can be extended to the two-customer inequalities \eqref{valid1} as well and is formally stated in the following theorem.

\begin{theorem}\label{prop:2link}
	Let
	\begin{align}
		& \max \left\{ \sum_{j\in \CJ}w_j x_j \,:\, \eqref{p-cover}-\eqref{ybinary},~x_j \leq x_r + y(\CI_j \backslash \CI_r) ,~\forall~j, r \in \CJ~\text{with}~j \neq r \right\},	\label{2linkall}\\
		& \max \left\{\sum_{j\in \CJ}w_j x_j \,:\, \eqref{p-cover}-\eqref{ybinary},~x_j \leq x_r + y(\CI_j \backslash \CI_r), ~\forall~j \in \CJ \backslash \N, r \in \N \right\}.\label{model:2link}
	\end{align}
	The \LP relaxations of problems \eqref{2linkall} and \eqref{model:2link} are equivalent in terms of providing the same optimal value.
\end{theorem}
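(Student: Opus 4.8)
The plan is to mirror the proof of \cref{prop:addpnineq}. Write $o_1$ and $o_2$ for the optimal values of the \LP relaxations of \eqref{2linkall} and \eqref{model:2link}. Since \eqref{2linkall} imposes \emph{every} two-customer inequality while \eqref{model:2link} retains only those with $j\in\CJ\backslash\N$ and $r\in\N$, the bound $o_1\le o_2$ is immediate. The entire content is thus the reverse inequality $o_2\le o_1$, which I would obtain by exhibiting one \emph{particular} optimal solution $(x^*,y^*)$ of the \LP relaxation of \eqref{model:2link} that happens to satisfy \emph{all} the two-customer inequalities \eqref{valid1}; such a point is feasible for the \LP relaxation of \eqref{2linkall} with objective value $o_2$, whence $o_2\le o_1$ and $o_1=o_2$.

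First I would establish the two-customer analogue of the characterization \eqref{opt2} in \cref{dominianceLemma}: there is an optimal solution $(x^*,y^*)$ of the \LP relaxation of \eqref{model:2link} with
\[
x^*_j =
\begin{cases}
\max\!\left\{\max_{i\in\CI_j}y^*_i,\ \max_{s\in\CJ\backslash\N}\bigl(x^*_s-y^*(\CI_s\backslash\CI_j)\bigr)\right\}, & j\in\N,\\
\min\!\left\{1,\ y^*(\CI_j),\ \min_{r\in\N}\bigl(x^*_r+y^*(\CI_j\backslash\CI_r)\bigr)\right\}, & j\in\CJ\backslash\N,
\end{cases}
\]
with the usual conventions for empty index sets (as in \cref{dominianceLemma}). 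The justification is the same sign/pushing argument used for \eqref{opt2}: for $j\in\CJ\backslash\N$ with $w_j\ge 0$ the value $x^*_j$ may be raised to the smallest of its upper bounds coming from \eqref{pos-cover}, from $x_j\le 1$, and from the retained two-customer inequalities, while for $j\in\N$ with $w_j<0$ it may be lowered to the largest of its lower bounds coming from \eqref{neg-cover} and the retained inequalities; the two-sided coupling is resolved exactly as in \cref{dominianceLemma}.

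The single key tool for the verification is the elementary \emph{coverage triangle inequality}
\[
y^*(\CI_a\backslash\CI_c)\ \le\ y^*(\CI_a\backslash\CI_b)+y^*(\CI_b\backslash\CI_c)\qquad\text{for all }a,b,c,
\]
which follows from the set inclusion $\CI_a\backslash\CI_c\subseteq(\CI_a\backslash\CI_b)\cup(\CI_b\backslash\CI_c)$ together with $y^*\ge 0$. This is precisely what replaces the containment chains $\CI_{p_j}\subseteq\CI_j\subseteq\CI_r\subseteq\CI_{n_r}$ exploited in \cref{prop:addpnineq}: here no dominance $\CI_j\subseteq\CI_r$ is assumed, so the slack terms $y^*(\CI_\bullet\backslash\CI_\bullet)$ must carry the argument instead. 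I would also record the companion facts $y^*(\CI_r)+y^*(\CI_j\backslash\CI_r)\ge y^*(\CI_j)$ (from the disjoint splitting $\CI_j=(\CI_j\cap\CI_r)\cup(\CI_j\backslash\CI_r)$) and $0\le y^*_i\le 1$.

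It then remains to verify $x^*_j\le x^*_r+y^*(\CI_j\backslash\CI_r)$ for every pair absent from \eqref{model:2link}, in the same three cases as in \cref{prop:addpnineq}: (i) $j,r\in\CJ\backslash\N$; (ii) $j,r\in\N$; (iii) $j\in\N,\ r\in\CJ\backslash\N$. Substituting the characterization turns each case into a comparison between a minimum or a maximum of finitely many explicit terms on either side, which I would settle term by term, each term comparison collapsing to one retained inequality of \eqref{model:2link} (a covering constraint, the bound $x^*\le 1$, or a kept two-customer inequality) combined with a single use of the triangle inequality, e.g. $y^*(\CI_j\backslash\CI_s)\le y^*(\CI_j\backslash\CI_r)+y^*(\CI_r\backslash\CI_s)$ in case (i). I expect the bookkeeping of case (iii) to be the main obstacle: there both $x^*_j$ (a maximum) and $x^*_r$ (a minimum) are genuinely present, so the claim demands that every defining term of $x^*_j$ lie below every defining term of $x^*_r$ shifted by $y^*(\CI_j\backslash\CI_r)$ — up to six comparisons. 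The most delicate, $x^*_s-y^*(\CI_s\backslash\CI_j)\le x^*_t+y^*(\CI_r\backslash\CI_t)+y^*(\CI_j\backslash\CI_r)$ with $s\in\CJ\backslash\N$ and $t\in\N$, follows from the retained inequality $x^*_s\le x^*_t+y^*(\CI_s\backslash\CI_t)$ after applying the triangle inequality twice along the chain $s,j,r,t$. Once all three cases are dispatched, $(x^*,y^*)$ is feasible for the \LP relaxation of \eqref{2linkall} with value $o_2$, giving $o_2\le o_1$ and hence the asserted equality.
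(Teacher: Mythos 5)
Your proposal is correct and takes essentially the same route as the paper's own proof: your explicit max/min characterization is exactly the paper's condition \eqref{opt3} with $p_j$ and $n_j$ written out as argmax/argmin, the verification proceeds by the same three cases, and your resolution of the delicate comparison in case (iii) — the retained inequality $x^*_s \leq x^*_t + y^*(\CI_s\backslash\CI_t)$ plus two applications of the coverage triangle inequality along the chain $s,j,r,t$ — is precisely the paper's step (b) with $s=p_j$, $t=n_r$. No gaps.
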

To prove \cref{prop:2link}, we first provide an optimality condition for the \LP relaxation of problem \eqref{model:2link}.
Given a feasible solution $(x^*, y^*)$ of the \LP relaxation of problem \eqref{model:2link}, let
\begin{align}
	& p_j = \argmax\{x^*_s  - y^*(\CI_s \backslash \CI_j) \,:\,  s \in \CJ\backslash\N \}~\text{for}~j \in \N \label{2linkpj},\\
	& n_j = \argmin\{x^*_s + y^*(\CI_j \backslash \CI_s) \,:\,  s \in \N \}~\text{for}~j \in \CJ\backslash\N.	\label{2linknj}
\end{align}
If $\CJ \backslash \N=\varnothing$, we let $p_j =0$, $\CI_{p_j}=\varnothing$, and $x^*_{p_j}=0$; and if $\N=\varnothing$, we let $n_j=-1$,  $\CI_{n_j}=\varnothing$, and $x^*_{n_j}=1$.
Then, it is easy to see that there exists an optimal solution $(x^*, y^*)$ of the {\rm \LP} relaxation of problem \eqref{model:2link} satisfying
\begin{equation}\label{opt3}
	x^*_j =\left\{ \begin{array}{ll} 
		\max\left\{\max_{i \in \CI_j} y^*_i, x^*_{p_j} - y^*(\CI_{p_j} \backslash \CI_{j}) \right\}, & \text{if}~j \in \N; \\
		\min\{1, y^*(\CI_j), x^*_{n_j} +  y^*(\CI_j \backslash \CI_{n_j}) \},&\text{otherwise},
	\end{array} \right.
	\forall~j \in \CJ.
\end{equation}

\begin{proof}[Proof of Theorem 5.2]
	Let $o_1$ and $o_2$ be the optimal values of the \LP relaxations of problems \eqref{2linkall} and \eqref{model:2link}, respectively. 
	Clearly, $o_1 \leq o_2$ holds. 
	To show $o_1 \geq o_2$, it suffices to show that for an optimal solution $(x^*, y^*)$ of the \LP relaxation of \eqref{model:2link} satisfying \eqref{opt3}, it follows 
	$x^*_j \leq x^*_r + y^*(\CI_j \backslash \CI_r)$ for all (i) $j, r \in \CJ \backslash \N$, (ii) $j, r \in \N$, and (iii) $j \in \N, r \in \CJ \backslash \N$.
	Observe that by \eqref{opt3}, if $\N = \varnothing$, then $x_j^* = \min \{ 1, y^*(\CI_j) \} \leq \min\{ 1+ y^*(\CI_j \backslash \CI_r), y^*(\CI_r) + y^*(\CI_j \backslash \CI_r) \} = \min\{ 1, y^*(\CI_r)  \}+ y^*(\CI_j \backslash \CI_r) = x_r^* +  y^*(\CI_j \backslash \CI_r)$ where $j, r \in \CJ$; 
	if $\CJ \backslash \N = \varnothing$, then  
	$x_j^* = \max_{i \in \CI_j}y_i^*\leq \max_{i \in \CI_r}y_i^*+ \max_{i \in \CI_j \backslash \CI_r}y_i^* \leq \max_{i \in \CI_r}y_i^*+y^*(\CI_j \backslash \CI_r) =  x^*_r +  y^*(\CI_j \backslash \CI_r)$ where $j,r \in \CJ$.
	In both case, $x^*_j \leq x^*_r + y^*(\CI_j \backslash \CI_r)$ holds for all $j,r \in \CJ$. 
	Thus, we can assume $\N \neq \varnothing$ and $\CJ \backslash \N \neq \varnothing$.
	We consider the three cases (i)--(iii), separately.
	\begin{itemize}
		\item[(i)] $j, r \in \CJ \backslash \N$.
		From the definition of $n_j$ in \eqref{2linknj} and $n_r \in \N$, we have $x^*_{n_j} + y^*(\CI_j \backslash \CI_{n_j}) \leq x^*_{n_r} + y^*(\CI_j \backslash \CI_{n_r}) \leq x^*_{n_r} + y^*(\CI_j \backslash \CI_r) + y^*(\CI_r \backslash \CI_{n_r})$.
		Together with $y^*(\CI_r) + y^*(\CI_j \backslash \CI_r) \geq y^*(\CI_j)$, we obtain
		\begin{equation*}
			\begin{aligned}
				x^*_j &= \min\{1, \ y^*(\CI_j), \ x^*_{n_j} + y^*(\CI_j \backslash \CI_{n_j}) \}  \\
				&\leq \min \{1+y^*(\CI_j \backslash \CI_r), \ y^*(\CI_r) + y^*(\CI_j \backslash \CI_r), \ x^*_{n_r} + y^*(\CI_j \backslash \CI_r) + y^*(\CI_r \backslash \CI_{n_r}) \} \\
				& = \min \{1, \ y^*(\CI_r), \ x^*_{n_r} + y^*(\CI_r \backslash \CI_{n_r}) \} + y^*(\CI_j \backslash \CI_r) = x^*_r + y^*(\CI_j \backslash \CI_r).
			\end{aligned}
		\end{equation*}
		\item[(ii)] $j, r \in \N$. 
		From the definition of $p_r $ in \eqref{2linkpj} and  $p_j \in \CJ\backslash\N$, we have  $x^*_{p_r} - y^*(\CI_{p_r} \backslash \CI_{r}) \geq x^*_{p_j} - y^*(\CI_{p_j} \backslash \CI_{r}) \geq x^*_{p_j} - y^*(\CI_{p_j} \backslash \CI_{j}) - y^*(\CI_j \backslash \CI_r)$,
		and thus $x^*_{p_r} - y^*(\CI_{p_r} \backslash \CI_{r}) + y^*(\CI_j \backslash \CI_r)\geq x^*_{p_j} - y^*(\CI_{p_j} \backslash \CI_{j})$.
		Together with $\max_{i \in \CI_j} y^*_i \leq \max_{i \in \CI_r} y^*_i + \max_{i \in \CI_j \backslash \CI_r} y^*_i \leq \max_{i \in \CI_r} y^*_i + y^*(\CI_j \backslash \CI_r)$, we obtain
		\begin{equation*}
			\begin{aligned}
				x^*_j  
				&= \max\left\{\max_{i \in \CI_j} y^*_i, \ x^*_{p_j} - y^*(\CI_{p_j} \backslash \CI_{j}) \right\} \\
				&\leq \max\left\{\max_{i \in \CI_r} y^*_i + y^*(\CI_j \backslash \CI_r), \ x^*_{p_r} - y^*(\CI_{p_r} \backslash \CI_{r}) + y^*(\CI_j \backslash \CI_r) \right\} \\
				&= \max\left\{\max_{i \in \CI_r} y^*_i, \ x^*_{p_r} - y^*(\CI_{p_r} \backslash \CI_{r}) \right\} + y^*(\CI_j \backslash \CI_r) = x^*_r + y^*(\CI_j \backslash \CI_r).
			\end{aligned}
		\end{equation*}
		\item[(iii)] $j \in \N, r \in \CJ \backslash \N$.
		As $n_r\in \N$, we have $x^*_{n_r} \geq \max_{i \in \CI_{n_r}} y^*_i$, which, together with $ y^*(\CI_j \backslash \CI_r) + y^*(\CI_{r} \backslash \CI_{n_r}) \geq  y^*(\CI_{j} \backslash \CI_{n_r}) $ and $\max_{i \in \CI_{n_r}} y^*_i +  y^*(\CI_{j} \backslash \CI_{n_r}) \geq \max_{i \in \CI_{n_r}} y^*_i +  \max_{i \in \CI_j\backslash  \CI_{n_r}}y^*_i \geq \max_{i \in \CI_j} y^*_i$, implies 	 
		(a) $x^*_{n_r} + y^*(\CI_j \backslash \CI_r) + y^*(\CI_{r} \backslash \CI_{n_r})\geq \max_{i \in \CI_j} y^*_i$.
		From $p_j \in \CJ\backslash \N$ and $n_r \in \N$, we obtain (b) $x^*_{p_j} \leq x^*_{n_r} + y^*(\CI_{p_j} \backslash \CI_{n_r})\leq x^*_{n_r} +  y^*(\CI_{p_j} \backslash \CI_{j}) + y^*(\CI_{j} \backslash \CI_{r}) + y^*(\CI_{r} \backslash \CI_{n_r})$, or equivalently,  $x^*_{n_r} +  y^*(\CI_{j} \backslash \CI_{r})  + y^*(\CI_{r} \backslash \CI_{n_r}) \geq x^*_{p_j}-y^*(\CI_{p_j} \backslash \CI_{j}) $.
		Combining (a) and (b) yields
		\begin{equation}\label{geq1}
			x^*_{n_r} + y^*(\CI_{j} \backslash \CI_{r}) + y^*(\CI_{r} \backslash \CI_{n_r}) \geq \max \left\{\max_{i \in \CI_j} y^*_i, \ x^*_{p_j} - y^*(\CI_{p_j} \backslash \CI_{j}) \right\} .	
		\end{equation}
		From $p_j \in \CJ\backslash \N$, we have $x_{p_j}^* \leq y^*(\CI_{p_j}) \leq y^*(\CI_{p_j} \backslash \CI_{j}) + y^*(\CI_{j} \backslash \CI_{r}) + y^*(\CI_{r}) $. 
		This, together with  $y^*(\CI_{j} \backslash \CI_{r}) + y^*(\CI_r) \geq y^*(\CI_j)\geq  \max_{i \in \CI_j} y^*_i$, indicates
		\begin{equation}\label{geq2}
			y^*(\CI_{j} \backslash \CI_{r}) + y^*(\CI_{r}) \geq \max \left\{\max_{i \in \CI_j} y^*_i, \ x^*_{p_j} - y^*(\CI_{p_j} \backslash \CI_{j})\right\} .
		\end{equation}
		Combining \eqref{geq1}, \eqref{geq2}, and $x^*_j =\max\{\max_{i \in \CI_j} y^*_i, \ x^*_{p_j} - y^*(\CI_{p_j} \backslash \CI_{j})\} \leq 1 \leq 1 + y^*(\CI_{j} \backslash \CI_{r})$, we obtain
		\begin{equation*}
			\begin{aligned}
				x^*_j &= \max\left\{\max_{i \in \CI_j} y^*_i, \ x^*_{p_j} - y^*(\CI_{p_j} \backslash \CI_{j}) \right\} \\
				&\leq \min \left\{1 + y^*(\CI_{j} \backslash \CI_{r}),~y^*(\CI_{j} \backslash \CI_{r}) + y^*(\CI_r),~x^*_{n_r} + y^*(\CI_{j} \backslash \CI_{r}) + y^*(\CI_{r} \backslash \CI_{n_r}) \right\} \\
				&= \min \left\{1,~y^*(\CI_r),~x^*_{n_r} + y^*(\CI_{r} \backslash \CI_{n_r}) \right\} + y^*(\CI_{j} \backslash \CI_{r})= x^*_r + y^*(\CI_{j} \backslash \CI_{r}).
			\end{aligned}
		\end{equation*}
	\end{itemize}
	\vspace{-1cm}
\end{proof}

Note that for the \MCLP or \MinCLP (i.e., the special case of the \MCLPNW with $\N=\varnothing$ or $\CJ \backslash \N=\varnothing$, respectively), no two-customer inequality is included in problem \eqref{model:2link}. 
Therefore, the equivalence of the \LP relaxations of problems \eqref{2linkall} and \eqref{model:2link} in \cref{prop:2link} implies that the two-customer inequalities \eqref{valid1} cannot improve the \LP relaxation bound of the \MIP formulation of the \MCLP or \MinCLP.
\begin{corollary}\label{coro11}
		For the classic \MCLP or \MinCLP, the LP relaxation of the \MIP formulation with the two-customer inequalities is equivalent to that of the \MIP formulation without the two-customer inequalities.
\end{corollary}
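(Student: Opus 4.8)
The plan is to deduce this corollary directly from \cref{prop:2link}, which has already established that the \LP relaxations of problems \eqref{2linkall} and \eqref{model:2link} share the same optimal value. The key observation is that the restricted family of two-customer inequalities appearing in problem \eqref{model:2link}---namely those indexed by $j \in \CJ \backslash \N$ and $r \in \N$---is empty precisely when $\N = \varnothing$ or $\CJ \backslash \N = \varnothing$.

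First I would verify this emptiness in each case. For the classic \MCLP we have $\N = \varnothing$, so there is no admissible index $r$; for the \MinCLP we have $\CJ \backslash \N = \varnothing$, so there is no admissible index $j$. In either case the index set $\{(j, r) \,:\, j \in \CJ\backslash\N,\, r \in \N\}$ is empty, and hence problem \eqref{model:2link} contains no two-customer inequality at all. Consequently \eqref{model:2link} coincides with the base formulation \eqref{mclp-nw1} with no two-customer inequalities added.

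Next I would chain the equalities. By \cref{prop:2link}, the \LP relaxation of \eqref{2linkall}---the formulation augmented with all two-customer inequalities \eqref{valid1}---has the same optimal value as the \LP relaxation of \eqref{model:2link}. Since the latter equals the \LP relaxation of \eqref{mclp-nw1} by the preceding step, the \LP relaxation bound of the formulation with the two-customer inequalities coincides with that of the formulation without them, which is exactly the claim.

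The argument carries no genuine difficulty: all of the substantive work is hidden in \cref{prop:2link}, whose case analysis shows that the full family \eqref{valid1} is already dominated by the sub-family restricted to $j \in \CJ\backslash\N$, $r \in \N$. The only point worth checking is the degenerate behavior of the definitions \eqref{2linkpj}--\eqref{2linknj} when one of $\N$ or $\CJ\backslash\N$ is empty; but the conventions stipulated there (setting $x^*_{p_j}=0$ or $x^*_{n_j}=1$ in the respective degenerate cases), together with the opening paragraph of the proof of \cref{prop:2link}, already dispose of exactly these situations, so no separate treatment is needed here.
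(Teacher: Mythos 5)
Your proposal is correct and follows exactly the paper's own reasoning: observe that the restricted family of two-customer inequalities in problem \eqref{model:2link} is empty when $\N=\varnothing$ (classic \MCLP) or $\CJ\backslash\N=\varnothing$ (\MinCLP), so \eqref{model:2link} reduces to formulation \eqref{mclp-nw1}, and then invoke \cref{prop:2link} to conclude the \LP relaxation with all inequalities \eqref{valid1} has the same value. Your added check that the degenerate conventions in \eqref{2linkpj}--\eqref{2linknj} and the opening of the proof of \cref{prop:2link} cover exactly these cases is a sound, if optional, piece of diligence.
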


The following proposition further provides a necessary condition for the two-customer inequality $x_j \leq x_r + y(\CI_j \backslash \CI_r)$ with $j \in \CJ\backslash\N $ and $r \in \N $ to strengthen the \LP relaxation of the \MCLPNW.

\begin{proposition}\label{re2:validineq}
	Let $j \in \CJ\backslash\N$ and $r \in \N$. If $|\CI_j \cap \CI_r| \leq 1$,  inequality \eqref{valid1} is dominated by other inequalities in formulation \eqref{model:2link}.
\end{proposition}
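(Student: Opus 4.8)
The plan is to show that, under the hypothesis $|\CI_j \cap \CI_r| \leq 1$, the two-customer inequality $x_j \leq x_r + y(\CI_j \backslash \CI_r)$ is implied by a nonnegative combination of constraints already present in formulation \eqref{model:2link}, hence redundant (``dominated''). The key reduction is that the covering constraint \eqref{pos-cover} gives $x_j \leq y(\CI_j)$ for $j \in \CJ\backslash\N$, so it suffices to prove the single inequality $x_r + y(\CI_j \backslash \CI_r) \geq y(\CI_j)$ for every point feasible to the \LP relaxation of \eqref{model:2link}; chaining this with $x_j \leq y(\CI_j)$ then yields the claim.

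First I would split into two cases according to $|\CI_j \cap \CI_r|$. If $\CI_j \cap \CI_r = \varnothing$, then $\CI_j \backslash \CI_r = \CI_j$, and since the relaxed bound $x_r \geq 0$ holds, we get $x_r + y(\CI_j \backslash \CI_r) = x_r + y(\CI_j) \geq y(\CI_j) \geq x_j$. If $\CI_j \cap \CI_r = \{i_0\}$ is a singleton, then $y(\CI_j \backslash \CI_r) = y(\CI_j) - y_{i_0}$; here I would invoke the covering constraint \eqref{neg-cover}, which for the negative-weight customer $r \in \N$ and the location $i_0 \in \CI_r$ reads $x_r \geq y_{i_0}$. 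Substituting gives $x_r + y(\CI_j \backslash \CI_r) \geq y_{i_0} + y(\CI_j) - y_{i_0} = y(\CI_j) \geq x_j$, as desired.

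Combining the two cases shows the inequality is implied by \eqref{pos-cover}, \eqref{neg-cover}, and the variable bounds, all of which belong to formulation \eqref{model:2link}, establishing the claim. The proof is short, and I do not anticipate a genuine obstacle; the only point requiring care is correctly identifying which constraint supplies the lower bound on $x_r$ in each case — the nonnegativity bound when the intersection is empty, and constraint \eqref{neg-cover} (which is available precisely because $r \in \N$) when the intersection is the single location $i_0$. It is worth emphasizing that the argument crucially uses both $r \in \N$ (so that \eqref{neg-cover} applies to $r$) and $j \in \CJ\backslash\N$ (so that \eqref{pos-cover} applies to $j$), which is exactly the regime of the two-customer inequalities appearing in \eqref{model:2link}. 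This also explains the role of the proposition for the separation routine: only inequalities with $|\CI_j \cap \CI_r| \geq 2$ can be nonredundant and thus need to be separated.
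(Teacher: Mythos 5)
Your proof is correct and follows essentially the same route as the paper's: the same case split on $|\CI_j \cap \CI_r| \in \{0,1\}$, using $x_j \leq y(\CI_j)$ from \eqref{pos-cover} together with $x_r \geq 0$ in the empty case and $x_r \geq y_{i_0}$ from \eqref{neg-cover} in the singleton case. The only difference is cosmetic: you phrase the domination as a chained bound $x_j \leq y(\CI_j) \leq x_r + y(\CI_j\backslash\CI_r)$, whereas the paper states directly which inequalities dominate \eqref{valid1}.
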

\begin{proof}
	If $|\CI_j \cap \CI_r| = 0$, then inequality \eqref{valid1} reduces to $x_j \leq x_r +  y(\CI_j) $ and thus is dominated by inequality $x_j \leq y(\CI_j)$. 
	Otherwise, $\CI_j \cap \CI_r = \{i'\}$ holds for some $i' \in \CI$.
	In this case, inequality \eqref{valid1} reduces to $x_j \leq x_r +  y(\CI_j\backslash\{i'\}) $ and is dominated by inequalities $x_j \leq  y(\CI_j) $ and $y_{i'} \leq x_r$.
\end{proof}

Combining \cref{prop:2link} and \cref{re2:validineq}, we can conclude that
in order to use the two-customer inequalities \eqref{valid1} to strengthen the \LP relaxation of formulation \eqref{mclp-nw1}, it suffices to consider those with $j \in \CJ\backslash 
\N$, $r \in \N$, and $|\CI_j \cap \CI_r|\geq 2$.

\begin{example}\label{ex:twolink}
	Consider an example of the \MCLPNW where $p = 1$ and there exist three customers and four candidate facility locations.
	The weights of the three customers are $w_1 = 1$, $w_2 = -1$, and $w_3 = -1$, and
	$\CI_1 = \{2, 3, 4\}$, $\CI_2 = \{1, 2, 3\}$, and $\CI_3 =\{1,4\}$.
	In this example, no dominance inequality exists and the  \LP relaxation of formulation \eqref{mclp-nw1} reads
	\begin{equation}\label{temp2}
		\begin{aligned}
			z_{\rm LP} =  \max_{(x, y)\in [0, 1]^3\times [0, 1]^4} & \left\{ x_1-x_2 -x_3 \,:\, y_1+y_2+y_3+y_4 = 1,~y_2+y_3+y_4 \geq x_1,\right. \\
			& \qquad\qquad\qquad \left. x_2 \geq y_1,~x_2 \geq y_2,~x_2 \geq y_3,~x_3 \geq y_1,~x_3 \geq y_4\right\},
		\end{aligned}
	\end{equation}
	where an optimal solution of problem \eqref{temp2} is given by $(\hat{x}, \hat{y}) = (1, \frac{1}{2}, 0, 0, \frac{1}{2}, \frac{1}{2}, 0)$ with an objective value of $\frac{1}{2}$.
	From \cref{prop:2link} and \cref{re2:validineq}, among the six two-customer inequalities, only $x_1 \leq x_2 +y_4$ could strengthen the \LP relaxation \eqref{temp2}. Moreover, it can cut off the optimal solution $(\hat{x}, \hat{y})$ of the \LP relaxation \eqref{temp2}.
	Adding it  into the problem, we obtain
	\begin{equation*}
		\begin{aligned}
			z'_{\rm LP} =  \max_{(x, y)\in [0, 1]^3\times [0, 1]^4} & \left\{ x_1-x_2 -x_3 \,:\, y_1+y_2+y_3+y_4 = 1,~y_2+y_3+y_4 \geq x_1,\right. \\
			& \qquad \left. x_2 \geq y_1,~x_2 \geq y_2,~x_2 \geq y_3,~x_3 \geq y_1,~x_3 \geq y_4,~x_1 \leq x_2 + y_4\right\}.
		\end{aligned}
	\end{equation*}
	By simple computation, we can check that $(x^*,y^*) = (1, 0, 1, 0, 0, 0, 1)$ is an optimal solution of the above problem.
	Therefore, $z'_{\rm LP}=0 < z_{\rm LP}= \frac{1}{2}$.
\end{example}

\subsection{Separation}

Observe that due to the potentially huge number of the two-customer inequalities \eqref{valid1} (with $j \in \CJ\backslash \N$, $r \in \N$, and $|\CI_j \cap \CI_r|\geq 2$), directly adding them into formulation \eqref{mclp-nw1} may lead to a large \LP relaxation, making the resultant problem inefficient to be solved by \MIP solvers.
Therefore, we use a branch-and-cut approach in which inequalities \eqref{valid1} are separated on the fly.
Specifically, we first compute $\C= \{ (j,r)\,: \, j \in \CJ\backslash \N, ~r \in \N,~ |\CI_j \cap \CI_r|\geq 2 \}$.
Then for the current \LP relaxation solution $(\bar{x}, \bar{y})$ encountered during the branch-and-cut approach, we add, for each $(j,r) \in \C$, $x_j \leq x_r + y(\CI_j\backslash \CI_r)$ into the problem if it is violated by $(\bar{x}, \bar{y})$. 
Overall, the complexity of the separation algorithm is upper bounded by $\mathcal{O}(|\CJ| \sum_{j \in \CJ} |\CI_j|)$.

\section{Computational results}\label{sect:num}
In this section, we present computational results to demonstrate the effectiveness of the proposed isomorphic aggregation, dominance reduction, and two-customer inequalities for solving the \MCLPNW.
To do this, we first perform numerical experiments to demonstrate the effectiveness of embedding the three proposed techniques into a branch-and-cut solver.
\rev{Then, we compare our approach (i.e., using an \MIP solver with the three proposed techniques) with an extension of the state-of-the-art \BD in \cite{Cordeau2019}.}
Finally, we present computational results to evaluate the effect of using each technique for solving the \MCLPNW\footnote{In Appendix D of the online supplement, we also present computational results to demonstrate the effectiveness of the proposed  isomorphic aggregation, dominance reduction, and two-customer inequalities for solving a variant of the \MCLPNW that additionally takes the distance constraints of the facilities \citep{Moon1984,Berman2008,Grubesic2012} into account.}.

The proposed isomorphic aggregation, dominance reduction, and two-customer inequalities were implemented in Julia 1.7.3 using \CPLEX 20.1.0.
The parameters of \CPLEX were configured to run the code in a single-threaded mode, 
with a time limit of 7200 seconds and a relative \MIP gap tolerance of $0\%$.
Unless otherwise stated, all other parameters in \CPLEX were set to their default values. 
All computational experiments were performed on a cluster of Intel(R) Xeon(R) Gold 6140 CPU @ 2.30GHz computers.

We use two testsets of instances, namely, \DATAONE and \DATATWO.
Testset \DATAONE contains $240$ \MCLPNW instances with identical numbers of candidate facility locations and customers. $40$ instances of them were constructed by \cite{Berman2009} using the $p$-median instances from OR-Library \citep{Beasley1990}, and have up to $900$ candidate facility locations and customers and $p$ values ranging between $5$ and $200$.
According to \cite{Berman2009}, the coverage distance $R$ is computed as the $\frac{1}{2p}$ percentile of the distances between all pairs of customers,
and odd- and even-numbered customers are given a weight of $+1$ and $-1$, respectively.
In addition to this setting, we also construct $200$ \MCLPNW instances with non-unit weights and varying ratios $r$ of customers 
with negative weights (or positive weights), as to better reflect real-world scenarios. 
Specifically, for each $r \in \{0.1, 0.3, 0.5, 0.7, 0.9\}$, 
we randomly allocate negative weights to $r\times|\CJ|$  customers, while the remaining customers are assigned positive weights.
The positive and negative customer weights are uniformly selected from $\{1, 2, \ldots, 100\}$ and $\{-100, -99, \ldots, -1\}$, respectively.

Testset \DATATWO consists of $336$ \MCLPNW instances whose number of customers is much larger than the number of candidate facility locations.
We use a similar procedure as in \cite{Cordeau2019} to construct the instances in testset \DATATWO.
The numbers of customers $|\CJ|$ and candidate facility locations $|\CI|$ are chosen from $\{1000, 10000\}$ and $\{100, 200\}$, respectively.
The locations of all customers and candidate facilities are randomly chosen within a $30 \times 30$ region on the plane
and the distance $d_{ij}$ between candidate facility location $i$ and customer $j$ is calculated using the Euclidean distance metric.
The choices of the number of open facilities $p$ and the coverage distance $R$ are described in \cref{table_p_R}.
Similar to testset \DATAONE, testset \DATATWO is also further divided into six groups: \UNIT and \NU-$r$, $r \in \{0.1, 0.3, 0.5, 0.7, 0.9\}$, 
	where $\UNIT$ and  \NU-$r$ consist of $56$ instances with unit weights of $+1$ and $-1$, and non-unit weights, respectively. 
	In particular, for instances in group $\UNIT$, we assign a weight of $+1$ to the odd-numbered customers and $-1$ to the even-numbered customers;
	for instances in groups \NU-$r$, 
	we randomly designate $r\times|\CJ|$ customers with negative weights,  uniformly selected from $\{-100, -99, \ldots, -1\}$,  and the remaining customers are assigned positive weights, uniformly chosen from $\{1, 2, \ldots, 100\}$.

\begin{table}[!htbp]
	\centering
	\setlength{\tabcolsep}{10pt}
	\renewcommand{\arraystretch}{1.1}
	\caption{Parameters of the instances in testset \DATATWO.}
	\label{table_p_R}
	\begin{tabular}{ll}  
		\hline    
		\multirow{1}{*}{$p$} & \multicolumn{1}{l}{$R$}  \\  
		\cline{1-2}  
		$ 10\%|\CI|$  & \multicolumn{1}{l}{$R \in  \{5.5, 5.75, 6, 6.25\}$}  \\   
		$15\%|\CI|$  & \multicolumn{1}{l}{$R\in \{4, 4.25, 4.5, 4.75, 5\}$} \\   
		$ 20\%|\CI|$ & \multicolumn{1}{l}{$R\in \{3.25, 3.5, 3.75, 4, 4.25\}$}  \\  
		\hline      
	\end{tabular}
\end{table}

\subsection{Effectiveness of the three proposed techniques}\label{sec:effIDT}

We first present computational results to show the effectiveness of embedding the proposed isomorphic aggregation, dominance reduction, and two-customer inequalities into the branch-and-cut solver \CPLEX for solving the \MCLPNW.
In particular, we compare the following \rev{three} settings:
\begin{itemize}
	\item \CPX: formulation \eqref{mclp-nw1} is solved using \CPLEX's branch-and-cut algorithm;
	\item \rev{\CPXC: formulation \eqref{mclp-nw1} is solved using \CPX with the presolving techniques P1--P4 of \cite{Chen2023b};}
	\item \IDI: formulation \eqref{mclp-nw1} is solved using \CPXC with the proposed isomorphic aggregation, dominance reduction, and two-customer inequalities.
\end{itemize}

\cref{T1:average,T2:average} summarize the computational results of settings \CPX, \CPXC, and \IDI on the \rev{instances} in testsets \DATAONE and \DATATWO, respectively.
Detailed statistics of instance-wise computational results can be found in Tables F.4--F.15  of the online supplement.
The two tables present results for $40$ and $56$ instances per row, respectively, grouped by unit weights and non-unit weights with different ratios of negative customer weights.
Each row reports the average\footnote{Throughout this paper, all averages are taken to be geometric means with a shift of $1$ (the shifted geometric mean of values $x_1, x_2, \ldots, x_n$ with shift $s$ is defined as $\prod_{k=1}^n{(x_k + s)}^{1/n}-s$; see \cite{Achterberg2007}).} percentage of \LP gap (\LPG) computed as  $\frac{z_{\LP}- z}{z} \times 100\%$,
where $z$ is the objective value of the optimal solution or the best incumbent of the \MCLPNW and $z_{\LP}$ is the optimal value of its \LP relaxation.
Under each setting, we report the number of solved instances (\tblS),
the average (total) CPU time in seconds (\tblT), the average number of explored nodes (\tblN), and the average percentage of \emph{gap improvement} defined by
 \begin{equation*}
 	\tblGPC =  \frac{z_{\LP}-z_{\text{root}} }{z_{\LP}-z} \times 100 \%.
 \end{equation*}
Here, $z_{\text{root}}$ is the \LP relaxation bound obtained at the root node.
Under settings \CPXC and \IDI, we additionally report the average percentage reduction in the number of variables (\tblFV) and constraints (\tblDC), and the average CPU time spent in the implementation of the presolving techniques in seconds (\tblPT).
Under setting \IDI, we report the average CPU time spent in the separation of the two-customer inequalities in seconds (\tblST).
To intuitively compare the performance of \CPX, \CPXC, and \IDI, we plot the performance profiles of the (total) CPU time and number of explored nodes in \cref{fig:cpxvsall}.

First, as shown in \cref{T1:average}, for instances in $\NU$-$0.1$ (i.e., the number of customers with negative weights is much smaller than that of customers with positive weights) of testset \DATAONE, the gap between the optimal value of  formulation (1) and its \LP relaxation is small, 
and thus all these instances can be efficiently solved by all three settings: \CPX, \CPXC, and \IDI. 
In contrast, for instances with a fairly large number of customers with negative weights (i.e., instances in  \UNIT and \NU-$r$, $r \in \{0.3,0.5,0.7,0.9\}$), the \LP relaxation of formulation (1) is usually very weak, and thus it is more difficult to solve these instances by \CPX.
Second, we can observe from \cref{T1:average} that for instances in testset \DATAONE, the reductions by the presolving techniques P1--P4 of \cite{Chen2023b} are not large, and thus we do not observe a relatively large performance improvement of \CPXC over \CPX.  
In contrast, the three proposed techniques enable us to reduce the problem size and substantially strengthen the \LP relaxation of formulation \eqref{mclp-nw1}.
In particular, the three proposed techniques enable us to remove $2.3\%$--$5.0\%$ variables and $5.6\%$--$20.4\%$ constraints from the problem formulation, and achieve a much better gap improvement than \CPX and \CPXC. 
Due to the smaller problem size and, particularly, the much tighter \LP relaxation, the performance of \IDI is much better than that of \CPX and \CPXC, especially for the relatively difficult instances in \UNIT and \NU-$0.5$ (where the number of customers with negative weights is identical to that of customers with positive weights).
Overall, \IDI can solve $221$ instances among the $240$ instances to optimality while \CPX and \CPXC can only solve $196$ of them to optimality; \IDI generally enables us to return a much smaller CPU time and number of explored nodes than those returned by \CPX and \CPXC.  
The latter is further confirmed by \cref{T1:time,T1:node}, where the red-triangle line corresponding to \IDI is generally higher than the blue-circle and black-star lines corresponding to \CPX and \CPXC, respectively.
It is worthwhile remarking that for instances in \UNIT of testset \DATAONE, only $21$ instances were solved to optimality by \cite{Berman2009} while $34$ instances can be solved to optimality by the proposed \IDI.
In \cref{T1:IDT}, we present the results of the $13$ newly solved instances.

\begin{table}[!h]
	\centering
	\footnotesize
	\addtolength{\tabcolsep}{-2pt}
	\renewcommand{\arraystretch}{1}
	\caption{Performance comparison of settings \CPX, \CPXC, and \IDI on the instances in testset \DATAONE.} \label{T1:average}
	\begin{tabular}{c*{20}{r}}
		\toprule
		\multirow{2}{*}{Groups} & \multirow{2}{*}{\LPG} & \multicolumn{4}{c}{\CPX} & \multicolumn{7}{c}{\CPXC} & \multicolumn{8}{c}{\IDI} \\
		\cmidrule(r{0.1em}){3-6}  \cmidrule(lr{1em}{0.1em}){7-13}  \cmidrule(lr{1em}{0.3em}){14-21} 
		&& \tblS & \multicolumn{1}{c}{\tblT}  & \multicolumn{1}{r}{\tblN} & \multicolumn{1}{r}{\tblGPC} 
		& \tblS  & \multicolumn{1}{c}{\tblT}   & \multicolumn{1}{r}{\tblN} & \multicolumn{1}{r}{\tblGPC}  & \multicolumn{1}{r}{\tblFV} & \multicolumn{1}{r}{\tblDC} & \multicolumn{1}{r}{\tblPT}
		& \tblS  & \multicolumn{1}{c}{\tblT}   & \multicolumn{1}{r}{\tblN} & \multicolumn{1}{r}{\tblGPC}  & \multicolumn{1}{r}{\tblFV} & \multicolumn{1}{r}{\tblDC}   & \multicolumn{1}{r}{\tblPT} & \multicolumn{1}{r}{\tblST} \\
		\hline 
		\UNIT &    57.6 &               28 & 55.2 & 232 & 39.0 &            28 & 60.7 & 199 & 40.0 & 2.5 & 1.7 & 2.5 &          34 & 19.7 & 4 & 93.8 & 4.4 & 13.8 & 1.3 & 2.6 \\ 
		\NU-0.1 &    3.2 &               40 & 2.9 & 20 & 52.4 &            40 & 4.0 & 19 & 54.1 & 4.7 & 5.4 & 0.5 &          40 & 3.2 & 4 & 92.2 & 5.0 & 5.6 & 0.8 & 0.9 \\ 
		\NU-0.3 &    13.6 &               33 & 21.0 & 193 & 41.3 &            34 & 20.1 & 168 & 42.6 & 3.7 & 3.0 & 1.3 &          37 & 5.5 & 5 & 94.3 & 4.3 & 9.6 & 0.3 & 1.4 \\ 
		\NU-0.5 &    37.1 &               30 & 39.0 & 164 & 36.7 &            29 & 37.1 & 119 & 38.3 & 2.6 & 1.8 & 1.6 &          34 & 10.4 & 4 & 93.5 & 3.7 & 12.9 & 0.4 & 1.8 \\ 
		\NU-0.7 &    78.2 &               30 & 36.6 & 152 & 38.2 &            31 & 37.0 & 160 & 39.3 & 1.6 & 0.9 & 1.3 &          36 & 7.3 & 3 & 95.1 & 3.0 & 16.8 & 0.2 & 1.8 \\ 
		\NU-0.9 &    178.6 &               35 & 13.7 & 40 & 48.3 &            34 & 13.6 & 40 & 48.3 & 0.5 & 0.3 & 0.6 &          40 & 3.3 & 2 & 96.7 & 2.3 & 20.4 & $<$0.1 & 1.1 \\ 
		\bottomrule
	\end{tabular}
\end{table}

\begin{table}[!h]
	\centering
	\footnotesize
	\addtolength{\tabcolsep}{-3.2pt}
	\renewcommand{\arraystretch}{1}
	\caption{Performance comparison of settings \CPX, \CPXC, and \IDI on the instances in testset \DATATWO.} \label{T2:average}
	\begin{tabular}{c*{20}{r}}
		\toprule
		\multirow{2}{*}{Groups} & \multirow{2}{*}{\LPG} & \multicolumn{4}{c}{\CPX} & \multicolumn{7}{c}{\CPXC} & \multicolumn{8}{c}{\IDI} \\
		\cmidrule(r{0.1em}){3-6}  \cmidrule(lr{1em}{0.1em}){7-13}  \cmidrule(lr{1em}{0.3em}){14-21} 
		&& \tblS & \multicolumn{1}{c}{\tblT}  & \multicolumn{1}{r}{\tblN} & \multicolumn{1}{r}{\tblGPC} 
		& \tblS  & \multicolumn{1}{c}{\tblT}   & \multicolumn{1}{r}{\tblN} & \multicolumn{1}{r}{\tblGPC}  & \multicolumn{1}{r}{\tblFV} & \multicolumn{1}{r}{\tblDC} & \multicolumn{1}{r}{\tblPT}
		& \tblS  & \multicolumn{1}{c}{\tblT}   & \multicolumn{1}{r}{\tblN} & \multicolumn{1}{r}{\tblGPC}  & \multicolumn{1}{r}{\tblFV} & \multicolumn{1}{r}{\tblDC}   & \multicolumn{1}{r}{\tblPT} & \multicolumn{1}{r}{\tblST} \\
		\hline 
		\UNIT &     685.6 &           4 & 5348.5 & 42842 & 39.6 &            14 & 3499.0 & 35798 & 55.6 & 26.4 & 4.5 & 18.5 &          56 & 18.0 & 137 & 98.7 & 70.7 & 82.7 & 2.3 & 3.5 \\ 
		\NU-0.1 &     5.4 &        56 & 7.6 & 17 & 87.3 &            56 & 2.8 & 1 & 98.9 & 57.6 & 29.3 & 1.3 &          56 & 2.8 & $<$1 & 99.9 & 65.8 & 71.4 & 1.8 & 0.2 \\ 
		\NU-0.3 &     43.6 &        28 & 387.0 & 7945 & 52.9 &            41 & 137.2 & 3622 & 74.5 & 41.5 & 10.4 & 4.0 &          56 & 2.1 & 2 & 99.7 & 65.7 & 75.9 & 1.0 & 0.5 \\ 
		\NU-0.5 &     705.0 &        5 & 5540.7 & 42864 & 37.9 &            12 & 3529.9 & 35733 & 54.4 & 26.6 & 4.6 & 16.9 &          56 & 24.3 & 336 & 97.8 & 65.6 & 79.1 & 1.4 & 3.7 \\ 
		\NU-0.7 &     277.8 &        15 & 2687.1 & 7628 & 44.4 &            19 & 2281.6 & 7145 & 56.9 & 13.2 & 1.8 & 5.2 &          56 & 16.7 & 463 & 86.9 & 65.4 & 81.8 & 1.2 & 1.2 \\ 
		\NU-0.9 &     67.5 &        41 & 409.1 & 1275 & 39.8 &            44 & 313.7 & 948 & 43.9 & 2.8 & 0.4 & 1.5 &          56 & 12.4 & 611 & 52.1 & 65.3 & 83.9 & 1.2 & 0.2 \\ 
		\bottomrule
	\end{tabular}
\end{table}

\begin{table}[!h]
	\centering
	\footnotesize
	\addtolength{\tabcolsep}{0pt}
	\renewcommand{\arraystretch}{1}
	\caption{Previously unsolved \MCLPNW instances in \citet{Berman2009} solved to optimality by the proposed \IDI.} \label{T1:IDT}
	\begin{tabular}{cc*{11}{r}}
		\toprule
		$|\CI|$ & $|\CJ|$  & $p$ &$R$  & $z_{\LP}$ & \tblLB  & \tblT & \tblN & \tblGPC & \tblFV & \tblDC & \tblPT & \tblST \\
		\hline 
		300 & 300 & 5 & 30 & 87.4 & 31 & 8.3 & 23 & 94.4 & 0.3 & 12.7 & 0.9 & 3.0 \\ 
		300 & 300 & 10 & 27 & 88.8 & 43 & 5.3 & 13 & 95.5 & 1.2 & 9.6 & 1.0 & 1.5 \\ 
		400 & 400 & 5 & 25 & 135.6 & 35 & 210.6 & 2469 & 87.7 & 0.9 & 9.6 & 1.2 & 7.0 \\ 
		400 & 400 & 10 & 21 & 120.2 & 58 & 10.9 & 82 & 92.9 & 0.9 & 10.3 & 1.0 & 2.3 \\ 
		400 & 400 & 40 & 14 & 118.4 & 90 & 2.5 & 0 & 100.0 & 6.0 & 14.4 & 0.8 & 0.7 \\ 
		500 & 500 & 5 & 23 & 169.8 & 48 & 1635.2 & 19962 & 84.5 & 0.1 & 3.5 & 1.9 & 13.7 \\ 
		500 & 500 & 10 & 21 & 169.0 & 82 & 170.6 & 1367 & 88.9 & 0.5 & 2.1 & 1.1 & 9.0 \\ 
		600 & 600 & 10 & 16 & 183.5 & 72 & 1701.6 & 28868 & 82.9 & 0.8 & 6.2 & 2.5 & 10.2 \\ 
		600 & 600 & 60 & 9 & 179.1 & 132 & 2.3 & 0 & 100.0 & 6.2 & 13.6 & 0.8 & 0.6 \\ 
		700 & 700 & 10 & 16 & 234.1 & 92 & 4953.9 & 43275 & 76.7 & 0.4 & 1.8 & 3.4 & 20.0 \\ 
		700 & 700 & 70 & 8 & 208.2 & 161 & 2.0 & 0 & 100.0 & 5.9 & 14.0 & 0.8 & 0.5 \\ 
		800 & 800 & 80 & 8 & 253.6 & 187 & 2.5 & 0 & 100.0 & 3.9 & 13.5 & 0.8 & 0.7 \\ 
		900 & 900 & 90 & 7 & 293.9 & 230 & 3.1 & 0 & 100.0 & 4.7 & 12.3 & 0.8 & 0.8 \\ 
		\bottomrule 
	\end{tabular}
\end{table}

\begin{figure}[!h]
	\centering
	\subfloat[\DATAONE]{\label{T1:time}\includegraphics[width=0.47\textwidth]{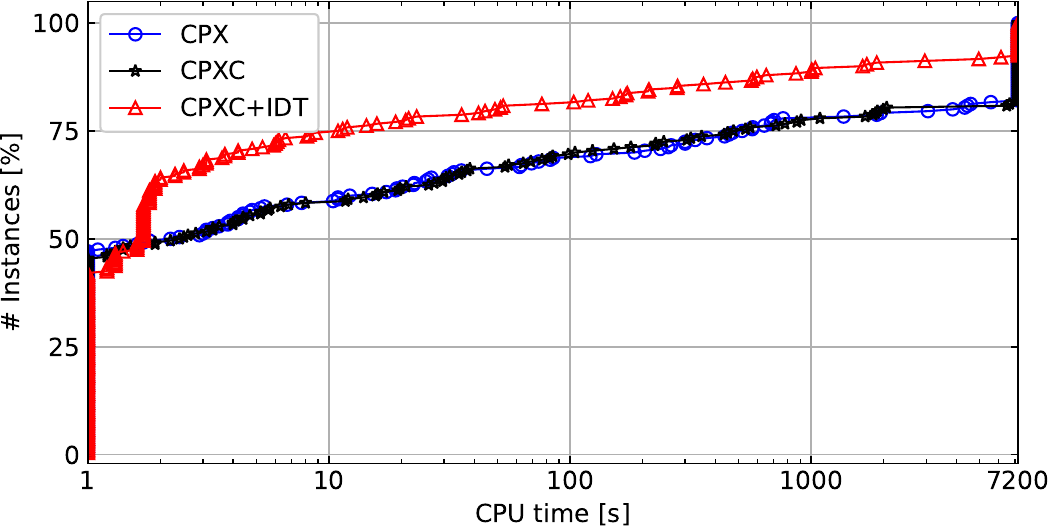}}
	\qquad
	\subfloat[\DATAONE]{\label{T1:node}\includegraphics[width=0.47\textwidth]{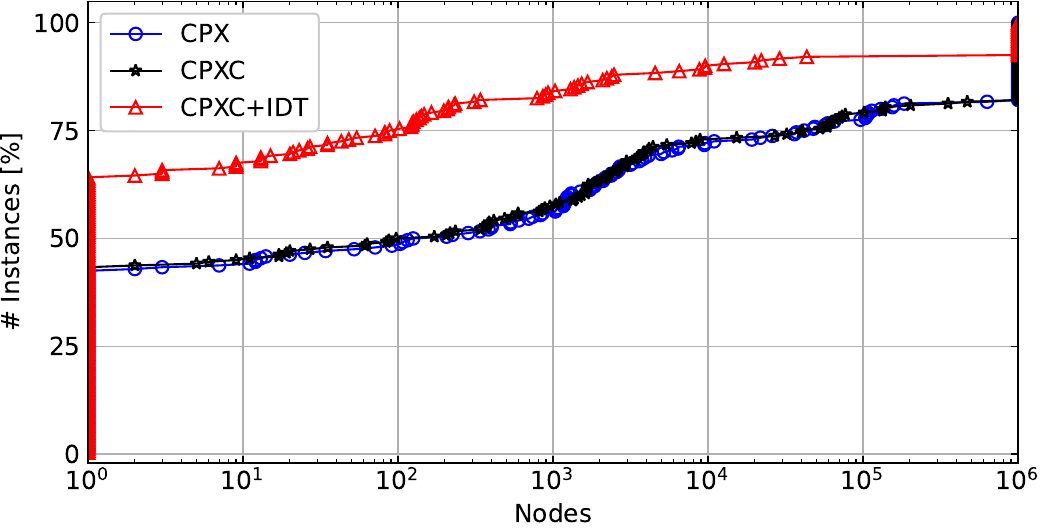}}
	\qquad
	\subfloat[\DATATWO]{\label{T2:time}\includegraphics[width=0.47\textwidth]{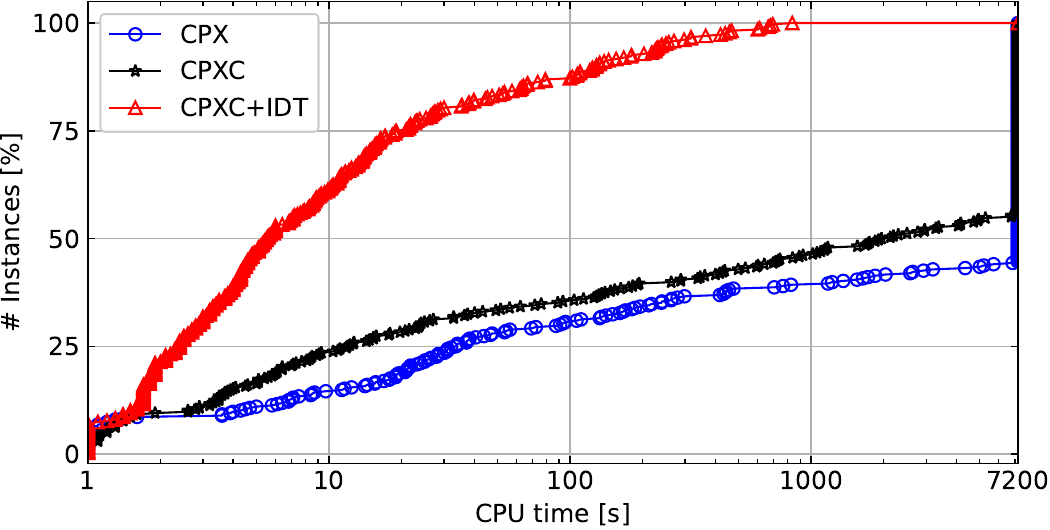}}
	\qquad
	\subfloat[\DATATWO]{\label{T2:node}\includegraphics[width=0.47\textwidth]{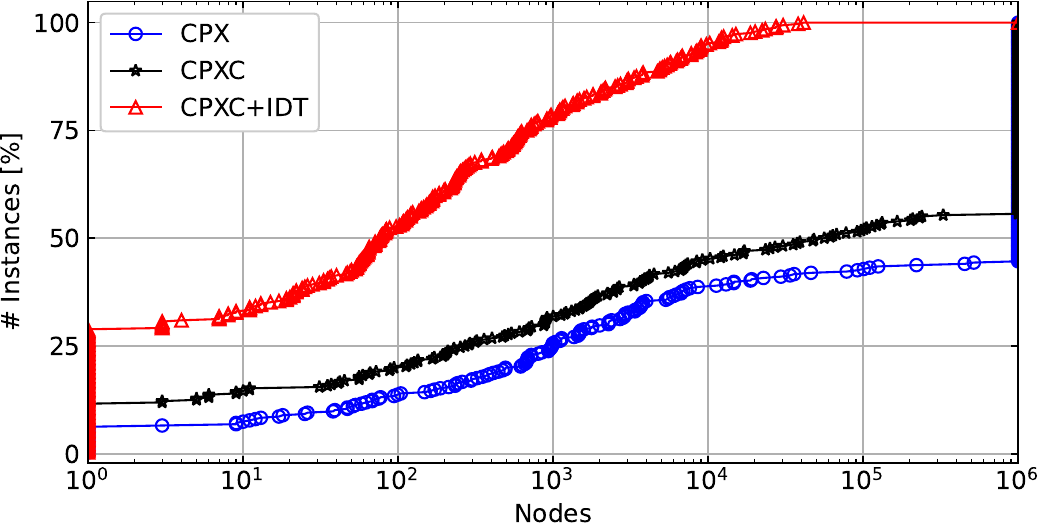}}
	\vspace*{-0.2cm}
	\caption{Performance profiles of the CPU time and number of explored nodes for settings \CPX, \CPXC, and \IDI.}
	\vspace*{-0.2cm}
	\label{fig:cpxvsall}
\end{figure}

For instances in testset \DATATWO, the performance improvement by the presolving techniques P1--P4 of \cite{Chen2023b} is relatively large but still not significant; see \cref{T2:time,T2:node}.
In contrast, we can observe a tremendous performance improvement by the three proposed techniques. 
In particular, with the three proposed techniques, we can observe a reduction of $65.3\%$--$70.7\%$ variables and $71.4\%$--$83.9\%$ constraints, and a gap improvement of $52.1\%$--$99.9\%$.
Overall, \IDI, equipped with the three proposed techniques, can solve all $336$ instances to optimality with an average solution time of $2.1$--$24.3$ seconds.
In sharp contrast, \CPX and \CPXC are only capable of solving $149$ and $186$ instances, respectively, to optimality within the time limit of 7200 seconds.
Indeed, for the relatively difficult instances in \UNIT and \NU-$0.5$ (i.e., instances with half of the customers with negative weights), \CPX  can only solve $4$ and $5$ instances to optimality, while \CPXC can only solve $14$ and $12$ instances to optimality.
These results highlight the efficiency of the three proposed techniques for solving realistic \MCLPNW{s} with a large number of customers, i.e., it can effectively turn them from intractable to easily solvable.

\subsection{Comparison with the state-of-the-art Benders decomposition}

In this subsection, we extend the state-of-the-art \BD of \cite{Cordeau2019} to solving the \MCLPNW, denoted as \BDC, and compare it with the proposed \IDI. 
A detailed discussion on the extension of the \BD to solving the \MCLPNW is provided in Appendix E of the online supplement.
In our implementation of the \BD, we apply the isomorphic aggregation to reduce the problem size of the \MCLPNW, as to accelerate the \BD. 
We do not apply the dominance reduction and two-customer inequalities as the Benders master problem does not contain variables $x$.

Detailed statistics of instance-wise computational results can be found in Tables F.16--F.27 of the online supplement.
\cref{fig:comparePreandBD} plots the performance profiles of the CPU times returned by \BDC and \IDI.
We can observe from \cref{fig:comparePreandBD} that \IDI significantly outperforms \BDC for instances in both testsets \DATAONE and \DATATWO. 
In particular, \IDI can solve $90\%$ of instances and all instances to optimality within the time limit of 7200 seconds in testsets \DATAONE and \DATATWO, respectively, 
while \BDC can only solve about $40\%$ and $30\%$ of instances to optimality in testsets \DATAONE and \DATATWO, respectively.
This is not surprising, since the efficiency of a \BD highly depends on the tightness of the \LP relaxation of the original formulation (or equivalently, the \LP relaxation of the Benders master problem;\citep{Rahmaniani2017}) and 
unfortunately, unlike the classic \MCLP whose \LP relaxation is usually tight or near tight \citep{ReVelle1993,Snyder2011,Cordeau2019}, the \MCLPNW suffers from an extremely weak \LP relaxation and thus the performance of the \BD is not competitive. 

\begin{figure}[!h]
	\centering 
	\subfloat[\DATAONE]{\label{T1:compchenT}\includegraphics[width=0.47\textwidth]{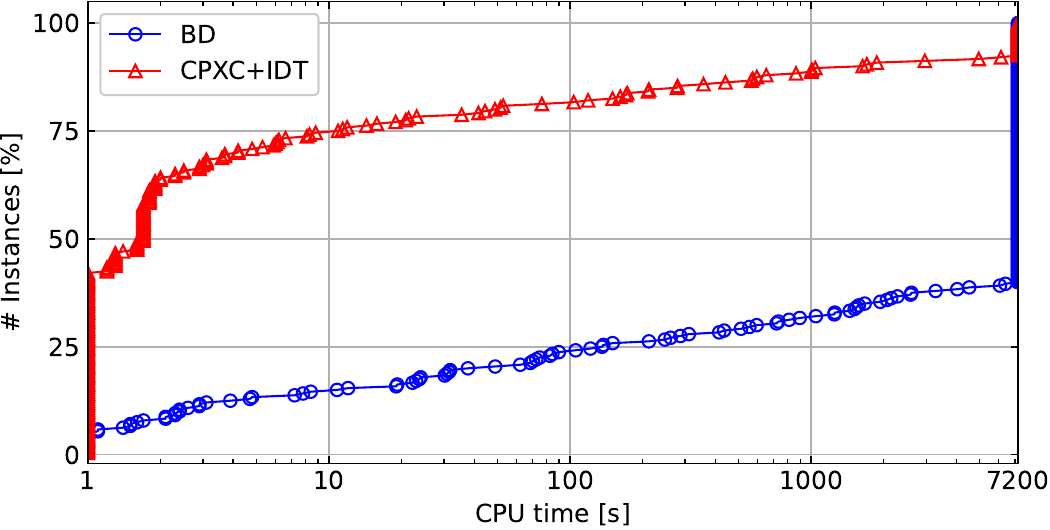}}
	\qquad
	\subfloat[\DATATWO]{\label{T1:compchenN}\includegraphics[width=0.47\textwidth]{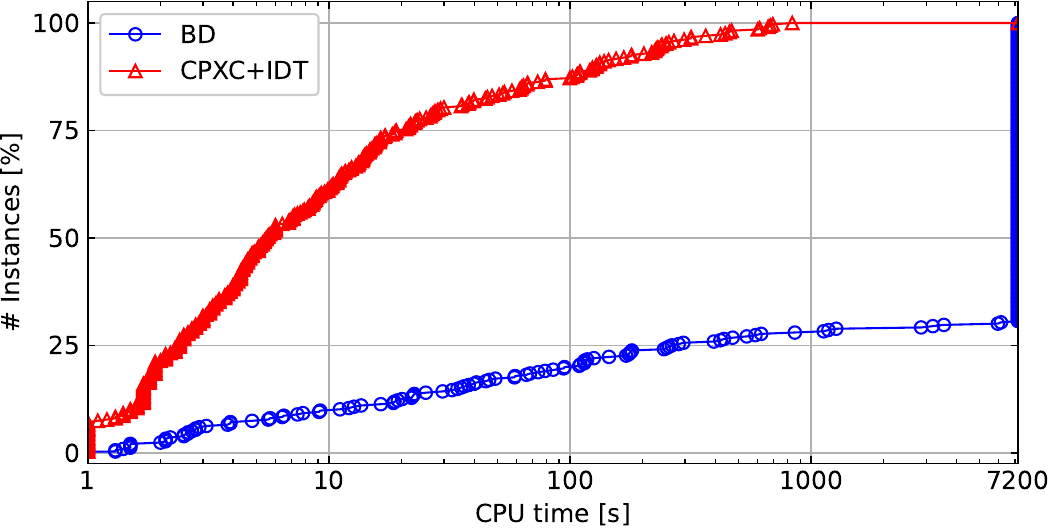}}
	\vspace*{-0.2cm}
	\caption{Performance profiles of the CPU time for settings \BDC and \IDI.}
	\vspace*{-0.2cm}
	\label{fig:comparePreandBD}
\end{figure}

\subsection{Effect of each technique}

Next, we evaluate the effect of using each technique for solving the \MCLPNW.
To do this, we compare the performance of \IDI with three settings, obtained by disabling one of the three proposed techniques of \IDI.
In the following, we use \NOAGG, \NODR, and \NOVI to denote \IDI with the isomorphic aggregation, dominance reduction, and two-customer inequalities disabled, respectively.

The performance comparison of \IDI with \NOAGG, \NODR, and \NOVI is summarized in \cref{eachsettings} and \cref{fig:eachtechnique}.
Detailed statistics of instance-wise computational results can be found in Tables F.28--F.39 of the online supplement.
In \cref{eachsettings}, columns \DeltaS and \RGPC denote the differences in the number of solved instances and the average percentage of gap improvement returned by each of the three settings (i.e., \NOAGG, \NODR, and \NOVI) and \IDI, respectively 
(a negative value under the three settings means that \IDI can solve more instances to optimality and return a better gap improvement).
Columns \RT and \RN display the ratios of the average CPU time and average number of explored nodes,
and columns \RV and \RC represent the average ratios of numbers of variables and constraints (a value greater than $1.0$ represents an improvement for \IDI).
We also plot the performance profiles of the CPU time and number of explored nodes  in \cref{fig:eachtechnique}.

For instances in testset \DATAONE, we observe from \cref{eachsettings} and \cref{T1:time1,T1:node1} that the two-customer inequalities have a fairly large positive impact.
In particular, we can observe an additional $46.32\%$ gap improvement of  \IDI over \NOVI, showing that  the two-customer inequalities can effectively strengthen the \LP relaxation of formulation \eqref{mclp-nw1}. 
With these inequalities, $24$ more instances can be solved to optimality, and the CPU time and number of explored nodes are reduced by factors of $3.04$ and $21.25$, respectively.
For the isomorphic aggregation or dominance reduction, the performance is, however, neutral, as illustrated in \cref{T1:time1,T1:node1}. 
This can be explained as follows.
First, the reductions on the number of variables and constraints by the two presolving techniques are relatively small (as shown in columns \RV and \RC of \cref{eachsettings}).
Second, the addition of the isomorphic aggregation (respectively, the dominance reduction) does not make a better gap improvement of \IDI over \NOAGG (respectively, over \NODR), which is due to the inclusion of the dominance reduction in \NOAGG(respectively, the two-customer inequalities in \NODR). 
Indeed, (i) as shown in \cref{sec:dominance}, the relations $x_j =x_r$ derived by isomorphic aggregation are implied by the dominance inequalities; and (ii) as shown in \cref{sec_2link}, the dominance inequalities $x_j \leq x_r$ derived by dominance reduction are special cases of the two-customer inequalities\footnote{To further evaluate the individual performance of the proposed two-customer inequalities on the instances in testsets \DATAONE and \DATATWO, we have performed another experiment where only the two-customer inequality is implemented in \CPXC (denoted as \CPXCTCI). The results show that for instances in testset \DATAONE, the performance of \CPXCTCI is much better than that of \CPXC, while it is competitive to \IDI. For instances in testset \DATATWO, we can, however, observe a fairly large performance improvement of \IDI over \CPXCTCI; see Appendix G of the online supplement for more details.}.

The same argument can be applied in the context of solving the instances in testset \DATATWO where we only observe a slightly better gap improvement of \IDI over \NOAGG and \NODR.
However, for instances in testset \DATATWO, using the proposed isomorphic aggregation and dominance reduction, we can observe a fairly large reduction on the problem size; see columns $\RV$ and $\RC$ under setting $\NOAGG$ and column $\RC$ under setting $\NODR$.
Note that as the search space becomes smaller, this further leads to a reduction on the number of explored nodes; see \cref{T2:node1}.
Due to these improvements, the overall performance of \IDI is much better than \rev{that} of \NOAGG and \NODR. 
In particular, with the addition of the proposed isomorphic aggregation and dominance reduction, the CPU times are reduced by a factor of $6.41$ and $1.43$, respectively. 
In analogy to that on the instances in testset \DATAONE, the proposed two-customer inequalities have a significantly positive impact on the instances in testset \DATATWO. 
Overall, using the two-customer inequalities, $29$ more instances can be solved to optimality; and the CPU time and number of explored nodes are reduced by a factor of $3.02$ and $14.69$, respectively.

\begin{table}[!h]
		\centering
		\footnotesize
		\addtolength{\tabcolsep}{0pt}
		\renewcommand{\arraystretch}{1}
		\caption{Performance comparison of settings \NOAGG, \NODR, \NOVI, and \IDI.} \label{eachsettings}
		\begin{tabular}{c*{16}{r}}
			\toprule
			\multirow{2}{*}{Testsets} & \multicolumn{6}{c}{\NOAGG} & \multicolumn{5}{c}{\NODR} & \multicolumn{4}{c}{\NOVI} \\\cmidrule(r){2-7}\cmidrule(r){8-12}\cmidrule(r){13-16}
			& \multicolumn{1}{r}{\DeltaS} & \multicolumn{1}{r}{\RT} & \multicolumn{1}{r}{\RN} & \multicolumn{1}{r}{\RGPC} & \multicolumn{1}{r}{\RV} & \multicolumn{1}{r}{\RC} & \multicolumn{1}{r}{\DeltaS} & \multicolumn{1}{r}{\RT} & \multicolumn{1}{r}{\RN} & \multicolumn{1}{r}{\RGPC} & \multicolumn{1}{r}{\RC} & \multicolumn{1}{r}{\DeltaS} & \multicolumn{1}{r}{\RT} & \multicolumn{1}{r}{\RN} & \multicolumn{1}{r}{\RGPC} \\
			\hline
			\DATAONE &                0 & 1.01 & 1.00 & 0.00 & 1.03 & 1.06          & 1 & 0.96 & 1.00 & 0.00 & 1.09          & -24 & 3.04 & 21.25 & -46.32   \\ 
			\DATATWO &                -34 & 6.41 & 1.97 & -0.36 & 3.22 & 5.11          & 0 & 1.43 & 1.24 & -0.18 & 1.43          & -29 & 3.02 & 14.69 & -5.88   \\ 
			\bottomrule
		\end{tabular}
\end{table}

\begin{figure}[!h]
	\centering 
	\subfloat[\DATAONE]{\label{T1:time1}\includegraphics[width=0.47\textwidth]{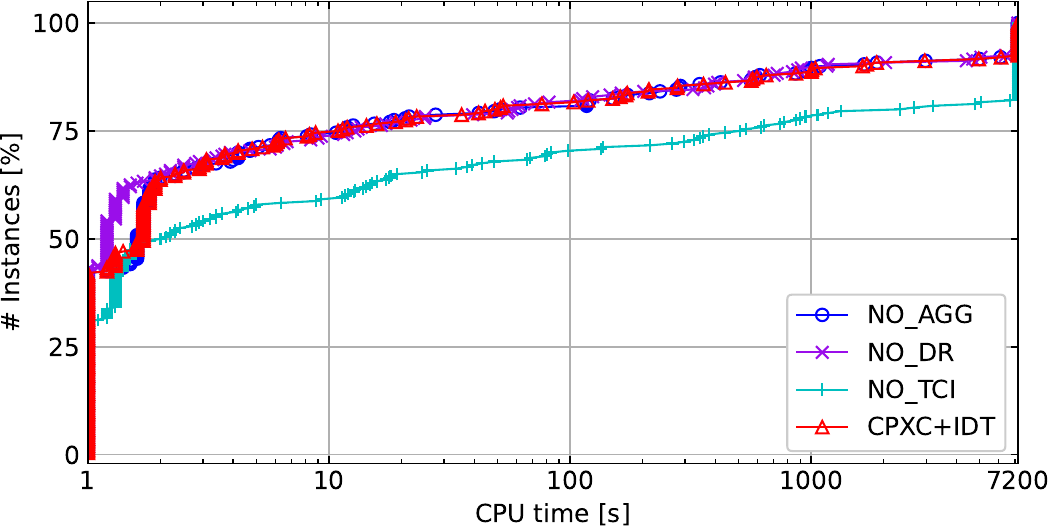}}
	\qquad
	\subfloat[\DATAONE]{\label{T1:node1}\includegraphics[width=0.47\textwidth]{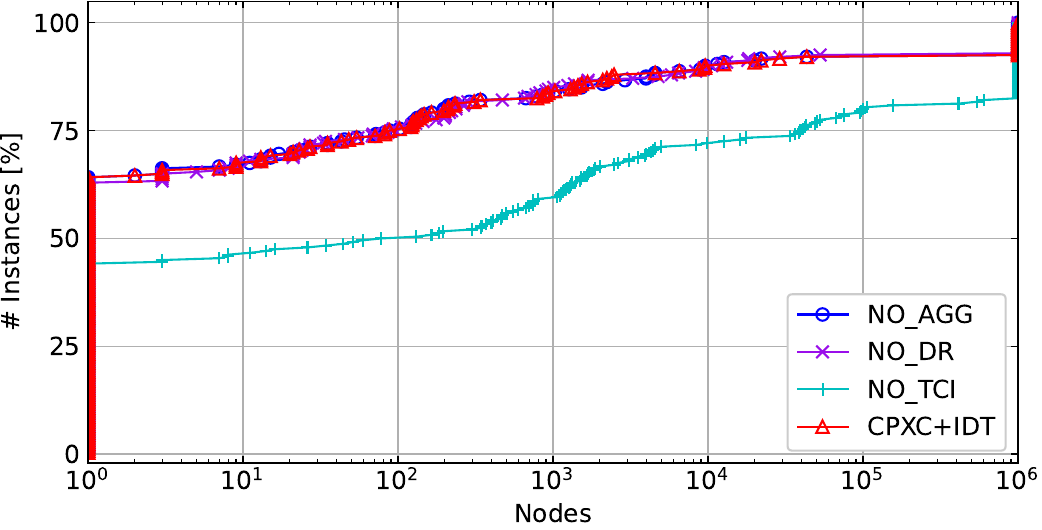}}
	\qquad
	\subfloat[\DATATWO]{\label{T2:time1}\includegraphics[width=0.47\textwidth]{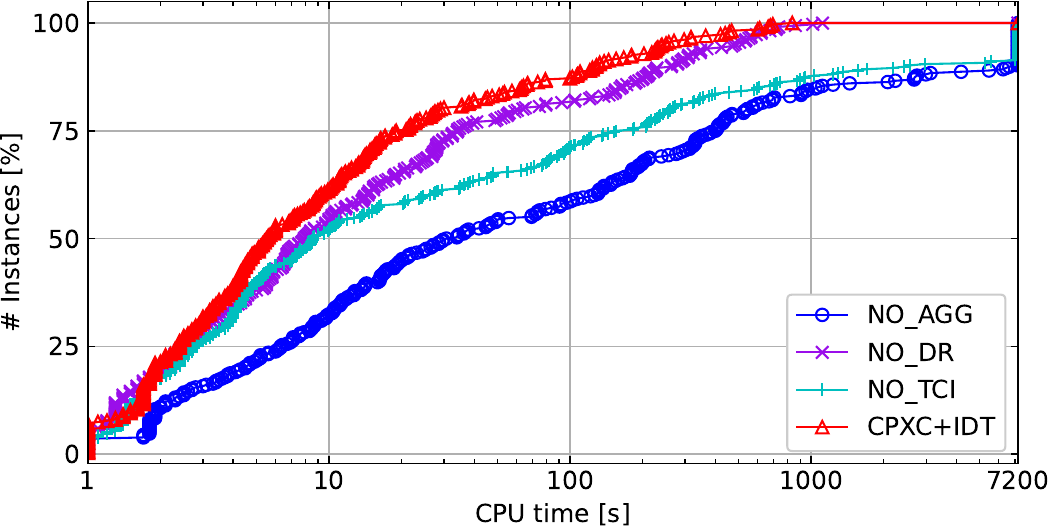}}
	\qquad
	\subfloat[\DATATWO]{\label{T2:node1}\includegraphics[width=0.47\textwidth]{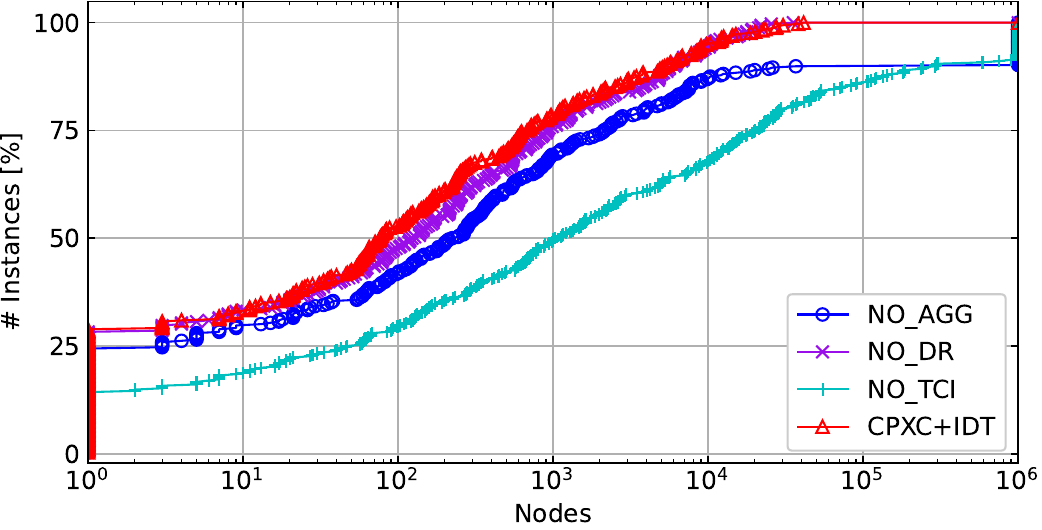}}
	\vspace*{-0.2cm}
	\caption{Performance profiles of the CPU time and number of explored nodes for settings \NOAGG, \NODR, \NOVI, and \IDI.}
	\vspace*{-0.2cm}
	\label{fig:eachtechnique}
\end{figure}

\section{Conclusion}\label{sec:conclusion}
In this paper, we have considered the \MCLPNW, where customer weights are allowed to be positive or negative, and proposed customized presolving and cutting plane techniques (namely, isomorphic aggregation, dominance reduction, and two-customer inequalities) to improve the computational performance of \MIP-based approaches. 
The proposed isomorphic aggregation and dominance reduction are able to not only reduce the problem size of the \MCLPNW but also improve the \LP relaxation of the problem formulation. 
The two-customer inequalities can be embedded into a branch-and-cut framework to further strengthen the \LP relaxation of the \MIP formulation on the fly. 
By extensive computational experiments, we have demonstrated that the three proposed techniques can substantially enhance the capability of \MIP solvers in solving \MCLPNW{s}.
In particular, the three proposed techniques enable us to turn many \MCLPNW instances from intractable to easily solvable.\\[4pt]
{\noindent\bf Acknowledgments}\\[4pt]
We would like to thank the three anonymous reviewers for their insightful comments that significantly improved the quality of our work.

\bibliography{shorttitles,GMCLP_arXiv}

\end{document}